\documentclass[final,leqno]{siamltex}   

\usepackage{color}
\usepackage{array,graphicx}
\usepackage{psfrag}
\usepackage{subfigure}
\usepackage{url}
\usepackage{cite}
\usepackage{epsfig}
\usepackage{amsmath,amssymb}

\providecommand{\rset}[1]{\mathbb{R}^}
\providecommand{\abs}[1]{\lvert#1\rvert}
\providecommand{\norm}[1]{\lVert#1\rVert}

\newtheorem{assumption}[theorem]{Assumption}

\newtheorem{example}[theorem]{Example}

\newenvironment{algorithm}[1]
{\vskip0.1cm\noindent\textsc{Algorithm } $($#1$)$.}{}
\usecounter{algorithmenumi}

\title{ Iteration complexity analysis of random coordinate descent methods
for $\ell_0$ regularized convex problems }
\author{  Andrei Patrascu and  Ion Necoara
\thanks{I. Necoara and  A. Patrascu are with  University  Politehnica Bucharest,
Automatic Control and Systems Engineering Department, 060042
Bucharest, Romania. {\tt\small \{ion.necoara,andrei.patrascu\}@acse.pub.ro}.}}
\date{\normalsize April 2014}

\begin{document}
\begin{sloppy}


\maketitle

\begin{abstract}
In this paper we analyze a family of general random block coordinate
descent  methods for the minimization of $\ell_0$ regularized
optimization problems, i.e. the objective function is composed of a
smooth convex function and the $\ell_0$ regularization.  Our family
of methods covers particular cases such as random block coordinate
gradient descent and random proximal coordinate descent methods. We
analyze necessary optimality conditions for this nonconvex $\ell_0$
regularized problem and devise a separation of the set of local
minima into restricted classes based on approximation versions of
the objective function. We provide a unified analysis of the almost
sure convergence for this family of block coordinate descent
algorithms and prove that, for each approximation version, the limit
points are local minima from the corresponding restricted class of
local minimizers. Under the strong convexity assumption, we prove
linear convergence in probability for our family of methods.
\end{abstract}

\begin{keywords}
$\ell_0$ regularized convex problems, Lipschitz gradient, restricted classes of local minima, random coordinate descent methods, iteration complexity analysis.
\end{keywords}
\pagestyle{myheadings} \thispagestyle{plain} \markboth{A. Patrascu  and I. Necoara}{Coordinate descent methods for $\ell_0$ regularized optimization}


\section{Introduction}
\noindent In this paper we analyze the properties of local minima
and devise a family of random block coordinate descent methods for
the following $\ell_0$ regularized optimization problem:
\begin{equation}\label{l0regular}
\min\limits_{x \in \rset^n} F(x) \quad \left(= f(x) +
\norm{x}_{0,\lambda} \right),
\end{equation}
where  function $f$ is smooth and convex and the quasinorm of $x$ is
defined as:  \[ \norm{x}_{0,\lambda}= \sum\limits_{i=1}^N
\lambda_i\norm{x_i}_0, \] where $\|x_i\|_0$ is the quasinorm which
counts the number of nonzero components in the vector $x_i \in
\rset^{n_i}$, which is the $i$th block component of $x$, and
$\lambda_i \ge 0$ for all $i=1, \dots, N$. Note that in this
formulation we do not impose sparsity on all block components of
$x$, but only on those $i$th blocks for which the corresponding
penalty parameter $\lambda_i>0$.  However, in order to avoid the
convex case, intensively studied in the literature, we assume that
there is at least one $i$ such that $\lambda_i>0$.

\noindent In many applications such as  compressed sensing
\cite{BluDav:08,CanTao:04}, sparse support
vector machines \cite{Bah:13}, sparse nonnegative factorization
\cite{Gil:12}, sparse principal component analysis \cite{JouNes:10}
or robust estimation \cite{Kek:13} we deal with a convex
optimization problem for which we like to get an (approximate)
solution, but we also desire a solution which has the additional
property of sparsity (it has few nonzero components). The typical
approach for obtaining a sparse minimizer of an optimization problem
involves minimizing the number of nonzero components of the
solution. In the literature for sparse optimization  two
formulations are widespread: \textit{(i) the regularized
formulation} obtained by adding an $\ell_0$ regularization term to
the original objective function as in \eqref{l0regular};
\textit{(ii) the sparsity constrained formulation} obtained by
including an additional constraint on the number of nonzero elements
of the variable vector. However, both formulations are hard
combinatorial problems, since solving them exactly would require to
try all possible sparse patterns in a brute-force way. Moreover,
there is no clear equivalence between them in the general case.

\noindent Several greedy algorithms have been developed in the last
decade  for the sparse linear least squares setting under certain
restricted isometry assumptions \cite{Bah:13,BluDav:08,CanTao:04}.
In particular,  the iterative hard thresholding algorithm has gained
a lot of interest lately due to its simple  iteration
\cite{BluDav:08}. Recently, in \cite{Lu:12}, a generalization of the
iterative hard thresholding algorithm has been given for general
$\ell_0$ regularized convex cone programming. The author shows
linear convergence of this algorithm for strongly convex  objective
functions,  while for general convex objective functions the author
considers the minimization over a bounded box set. Moreover, since
there could be an exponential number of local minimizers for the
$\ell_0$ regularized problem, there is no characterization  in
\cite{Lu:12} of the local minima at which the iterative hard
thresholding algorithm converges.
Further,  in \cite{LuZha:12}, penalty decomposition methods were
devised for both regularized and constrained formulations of sparse
nonconvex problems and convergence analysis was provided for these
algorithms. Analysis of sparsity constrained problems were provided
e.g. in \cite{Bec:12}, where the authors introduced several classes
of stationary points and developed greedy coordinate descent
algorithms converging to different classes of stationary points.
Coordinate descent methods are used frequently to solve sparse
optimization problems
\cite{Bec:12,LuXia:13,NecCli:13,NecNes:13,BanGha:08} since  they are
based on the strategy of updating one (block) coordinate of the
vector of variables per iteration using some index selection
procedure (e.g. cyclic, greedy or random). This often reduces
drastically the iteration complexity and memory requirements, making
these methods simple and scalable. There exist numerous papers
dealing with the convergence analysis of this type of methods: for
deterministic index selection see
\cite{HonWan:13,BecTet:13,LuoTse:92}, while for random index
selection see
\cite{LuXia:13,Nes:12,NecCli:13,Nec:13,NecPat:14,PatNec:14,RicTac:12}.


\subsection{Main contribution}
\noindent In this paper we analyze a family of general random block
coordinate descent  iterative hard thresholding based  methods for
the minimization of $\ell_0$ regularized optimization problems, i.e.
the objective function is composed of a smooth convex function and
the $\ell_0$ regularization. The family of the algorithms  we
consider takes a very general form, consisting in the minimization
of a certain approximate version of the objective function  one
block variable at a time,  while fixing the rest of the block
variables. Such type of methods are particularly suited for solving
nonsmooth $\ell_0$ regularized problems since they solve an easy low
dimensional problem at each iteration, often in closed form. Our
family of methods covers particular cases such as random block
coordinate gradient descent and random proximal coordinate descent
methods. We analyze necessary optimality conditions for this
nonconvex $\ell_0$ regularized problem and devise a procedure for the  separation of
the set of local minima into restricted classes based on
approximation versions of the objective function. We provide a
unified analysis of the almost sure convergence for this family of
random  block coordinate descent algorithms and prove that, for each
approximation version, the limit points are  local minima from the
corresponding restricted class of local minimizers. Under the strong
convexity assumption, we prove linear convergence in probability for
our family of methods. We also provide numerical experiments which
show the superior behavior of our methods in comparison with the
usual iterative hard thresholding algorithm.


\subsection{Notations and preliminaries}
We consider the space $\rset^n$ composed  by column
vectors.  For $x,y \in \rset^n$ denote the scalar product by
$\langle x,y \rangle = x^T y$ and the  Euclidean norm by
$\|x\|=\sqrt{x^T x}$. We use the same notation $\langle \cdot,\cdot
\rangle$ ($\|\cdot\|$) for  scalar product (norm) in spaces of
different dimensions. For any matrix $A \in \rset^{m \times n}$ we
use $\sigma_{\min}(A)$ for the minimal
eigenvalue of matrix $A$. We use the notation $[n] = \{1,2,
\dots, n\}$ and $e = [1 \cdots 1]^T \in \rset^n$. \noindent In the
sequel, we consider the following decompositions of the variable
dimension and  of the $n \times n$ identity matrix:
\begin{equation*}
n = \sum\limits_{i=1}^N n_i, \qquad \qquad I_n= \left[ U_1 \dots U_N
\right], \qquad \qquad I_n= \left[ U_{(1)} \dots U_{(n)} \right],
\end{equation*}
where $U_i \in \rset^{n \times n_i}$ and $U_{(j)} \in
\rset^{n}$ for all  $i \in [N]$ and $j \in [n]$. If the
index set corresponding to block $i$ is given by $\mathcal{S}_i$,
then $\abs{\mathcal{S}_i} = n_i$. Given $x \in \rset^n$, then
for any $i \in [N]$ and $j \in [n]$, we denote:
\begin{align*}
 x_i &= U_i^T x \in \rset^{n_i}, \quad \quad \quad \quad \ \  \nabla_i f(x)= U_i^T \nabla f(x) \in \rset^{n_i},\\
 x_{(j)} &= U_{(j)}^T x \in \rset^{}, \quad \quad \quad \quad \ \ \ \nabla_{(j)} f(x)= U_{(j)}^T \nabla f(x) \in \rset^{}.
\end{align*}


\noindent For any vector $x \in \rset^n$, the support of $x$ is
given by $\text{supp}(x)$, which denotes the set of indices corresponding
to the nonzero components of $x$. We denote $\bar{x} = \max\limits_{j
\in \text{supp}(x)} \abs{x_{(j)}}$ and $\underline{x} = \min\limits_{j \in
\text{supp}(x)} \abs{x_{(j)}}$. Additionally, we introduce the following
set of indices:
$$I(x) = \text{supp}(x) \cup \{j \in [n]: \ j \in \mathcal{S}_i, \ \lambda_i=0\}$$
and $I^c(x) = [n] \backslash I(x)$. Given two scalars $p\ge 1, r>0$
and $ x \in \rset^n$, the $p-$ball of radius $r$ and centered in $x$
is denoted by $\mathcal{B}_p(x,r) = \{y \in \rset^n: \; \norm{y-x}_p
< r \}$. Let $I \subseteq [n]$ and denote the subspace of all
vectors $x \in \rset^n$ satisfying $I(x) \subseteq I$ with $S_I$,
i.e. $S_I = \{x \in \rset^n: \; x_i=0 \quad \forall  i \notin I\}$.

\noindent We denote with $f^*$ the optimal value of the convex
problem $f^* = \min_{x \in \rset^n} f(x)$ and its optimal set with
$X^*_f=\left\{x \in \rset^n: \nabla f(x)=0 \right\}$.  In this paper
we consider the following assumption on function $f$:
\begin{assumption}\label{assump_grad_1}
The function $f$ has (block) coordinatewise Lipschitz continuous
gradient with constants $L_i>0$  for all $i \in [N]$, i.e. the
convex function $f$ satisfies the following inequality for all $i \in [N]$:
\begin{equation*}
  \norm{\nabla_i f(x+U_ih_i) - \nabla_i f(x)} \le L_i \norm{h_i}  \quad \forall x \in \rset^n, h_i \in \rset^{n_i}.
\end{equation*}
\end{assumption}
\noindent An immediate consequence of Assumption \ref{assump_grad_1}
is the following relation \cite{Nes:12}:
\begin{equation}\label{Lipschitz_gradient}
  f(x+U_ih_i) \le f(x) + \langle \nabla_i f(x), h_i\rangle +
  \frac{L_i}{2}\norm{h_i}^2 \quad \forall x \in \rset^n, h_i \in \rset^{n_i}.
\end{equation}
We denote with $\lambda=[\lambda_1 \cdots \lambda_N]^T \in \rset^N$,
$L=[L_1 \cdots L_N]^T $ and  $L_f$ the global Lipschitz constant
of the gradient $\nabla f(x)$. In the Euclidean settings, under
Assumption \ref{assump_grad_1}   a tight upper bound of the global
Lipschitz constant is  $L_f \le \sum_{i=1}^N L_i$ (see \cite[Lemma
2]{Nes:12}). Note that a global inequality based on $L_f$,
 similar to \eqref{Lipschitz_gradient},  can be also derived. Moreover,
we should remark that Assumption \ref{assump_grad_1} has been frequently considered in coordinate descent settings (see e.g. \cite{Nec:13,Nes:12,
NecNes:13,NecCli:13,NecPat:14,RicTac:12}).


\section{Characterization of local minima}
\noindent In this section we present the necessary optimality
conditions for problem \eqref{l0regular} and provide a detailed
description of local minimizers.  First, we establish necessary
optimality conditions satisfied by any local minimum. Then, we
separate the set of local minima into restricted classes around the
set of global minimizers. The next theorem provides conditions for
obtaining local minimizers of problem \eqref{l0regular}:

\begin{theorem}\label{lemmaaux}
If  Assumption \ref{assump_grad_1}   holds, then any $z \in \rset^n
\backslash \{0\}$ is a local minimizer of problem \eqref{l0regular}
on the ball $\mathcal{B}_{\infty}(z,r)$, with
$r=\min\left\{\underline{z},
 \frac{\underline{\lambda}}{\norm{\nabla f(z)}_1}\right\}$,
if and only if $z$ is a global minimizer of convex problem
$\min\limits_{x \in S_{I(z)}} f(x)$. Moreover, $0$ is a local
minimizer of problem \eqref{l0regular} on the ball
$\mathcal{B}_{\infty} \left(0,  \frac{\min_{i \in [N]}
\lambda_i}{\norm{\nabla f(z)}_1} \right)$ provided that $0 \not \in
X_f^*$, otherwise is a global minimizer for  \eqref{l0regular}.
\end{theorem}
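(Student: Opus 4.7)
The plan is to split the proof into the necessity direction, the sufficiency direction (both for $z\neq 0$), and then the easier $z=0$ case. A crucial preliminary observation, used in both directions, is that when $\|y-z\|_{\infty}<r\le\underline{z}$, the bound $|y_{(j)}-z_{(j)}|<|z_{(j)}|$ for every $j\in\text{supp}(z)$ forces $\text{supp}(z)\subseteq\text{supp}(y)$, so no previously nonzero component can be annihilated. A second recurring ingredient is that since $S_{I(z)}$ is a subspace, global minimality of $z$ on $S_{I(z)}$ yields the first-order condition $\nabla_{(j)}f(z)=0$ for every $j\in I(z)$, while by definition $z_{(j)}=0$ for every $j\in I^c(z)$.

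For necessity, I fix $y\in S_{I(z)}\cap\mathcal{B}_{\infty}(z,r)$. The support inclusion above, combined with $y_{(j)}=0$ for $j\in I^c(z)$ (which is what $y\in S_{I(z)}$ means for the penalized blocks, since non-penalized blocks are entirely contained in $I(z)$), forces $\|y_i\|_0=\|z_i\|_0$ for every $i$ with $\lambda_i>0$. Hence $\|y\|_{0,\lambda}=\|z\|_{0,\lambda}$, and $F(y)\ge F(z)$ collapses to $f(y)\ge f(z)$. Thus $z$ is a local minimizer of the convex function $f$ on the subspace $S_{I(z)}$; convexity promotes this to global minimality.

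For sufficiency, I take arbitrary $y\in\mathcal{B}_{\infty}(z,r)$ and split into two cases. If $y\in S_{I(z)}$, the argument from the necessity direction shows $\|y\|_{0,\lambda}=\|z\|_{0,\lambda}$, and the assumed global minimality gives $f(y)\ge f(z)$, hence $F(y)\ge F(z)$. Otherwise there exists $j^{*}\in I^c(z)$ with $y_{(j^{*})}\neq 0$; the block containing $j^{*}$ has $\lambda_{i^{*}}\ge\underline{\lambda}$, and since the old support is preserved, this new activation forces $\|y\|_{0,\lambda}-\|z\|_{0,\lambda}\ge\underline{\lambda}$. For the smooth part, convexity and the first-order condition yield
\begin{equation*}
f(y)-f(z)\ge\langle\nabla f(z),y-z\rangle=\sum_{j\in I^c(z)}\nabla_{(j)}f(z)\,y_{(j)},
\end{equation*}
and for $j\in I^c(z)$ we have $|y_{(j)}|=|y_{(j)}-z_{(j)}|<r$, so the last sum is bounded in absolute value by $r\,\|\nabla f(z)\|_1\le\underline{\lambda}$. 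Summing the two contributions gives $F(y)-F(z)>-\underline{\lambda}+\underline{\lambda}=0$.

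The $z=0$ case is a streamlined version of the above: if $0\in X_f^{*}$, then $0$ is already a global minimizer of the convex $f$, and adding the nonnegative penalty (which vanishes at $0$) preserves global minimality for $F$; if $0\notin X_f^{*}$, then for any nonzero $y$ in the prescribed ball at least one penalized block is activated, contributing at least $\min_i\lambda_i$ to the penalty, while convexity bounds $|f(y)-f(0)|\le\|\nabla f(0)\|_1\,\|y\|_{\infty}<\min_i\lambda_i$. I expect the principal obstacle to be the bookkeeping in the sufficiency direction: one must track exactly which blocks can lose or gain nonzero components under an $\ell_{\infty}$ perturbation, and verify that the two pieces in the definition $r=\min\{\underline{z},\underline{\lambda}/\|\nabla f(z)\|_1\}$ are exactly what is needed — the first to preserve the old support pattern (ruling out artificial penalty decreases), the second to ensure that any penalty increase from a newly activated component strictly dominates the possible drop in the smooth part estimated via the gradient.
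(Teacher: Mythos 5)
Your proof is correct and follows essentially the same route as the paper's: support preservation under an $\ell_\infty$ perturbation smaller than $\underline{z}$, a penalty jump of at least $\underline{\lambda}$ whenever a component outside $I(z)$ is activated, and a first-order/H\"older bound on the possible decrease of the smooth part. The only real deviation is in the necessity direction, where you invoke the local-to-global property of a convex function on the subspace $S_{I(z)}$ instead of the paper's explicit gradient step via the descent lemma --- a marginally more elementary argument (note only that, as in the paper, the $0 \notin X_f^*$ case is meaningful only when $\min_{i}\lambda_i>0$, and that convexity gives you only the one-sided bound $f(y)-f(0)\ge -\norm{\nabla f(0)}_1\norm{y}_\infty$, which is all you actually use).
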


\begin{proof}
For the first implication, we assume that $z$ is a local minimizer
of problem \eqref{l0regular} on the open ball
$\mathcal{B}_{\infty}(z,r)$, i.e. we have:
$$f(z) \le f(y)  \quad \forall y \in \mathcal{B}_{\infty}(z,r) \cap S_{I(z)}.$$

\noindent Based on Assumption \ref{assump_grad_1} it follows that
$f$ has also global Lipschitz continuous gradient, with constant
$L_f$, and thus we have:
$$f(z) \le f(y) \le f(z) + \langle \nabla f(z),y- z\rangle + \frac{L_f}{2}\norm{y- z}^2
 \quad \forall y \in \mathcal{B}_{\infty}(z,r)\cap S_{I(z)}.$$
Taking $\alpha = \min\{\frac{1}{L_f}, \frac{r}{\max\limits_{j \in
I(z)}\abs{\nabla_{(j)} f(z) }}\}$ and $y= z - \alpha \nabla_{I(z)}
f(z)$, we obtain:
$$0 \le \left(\frac{\alpha^2}{2L_f} - \frac{\alpha}{L_f}\right)\norm{\nabla_{I(z)} f(z)}^2 \le 0.$$
Therefore, we have $ \nabla_{I(z)} f(z) =0$, which means
that:
\begin{equation}\label{localmin}
z = \arg\min\limits_{x \in S_{I(z)}} f(x).
\end{equation}

\noindent For the second implication we first note that  for any $y,
d \in \rset^n$, with $ y \neq 0$ and $\norm{d}_{\infty} <
\underline{y} $,  we have:
\begin{equation}\label{positive}
    \abs{y_{(i)}+d_{(i)}} \ge \abs{y_{(i)}} - \abs{d_{(i)}} \ge \underline{y} - \norm{d}_{\infty} > 0  \quad \forall i \in \text{supp}(y).
\end{equation}
Clearly, for any $d \in \mathcal{B}_{\infty}(0,r) \backslash
S_{I(y)}$, with $r= \underline{y}$, we have:
\begin{equation*}
    \norm{y+d}_{0,\lambda} = \norm{y}_{0,\lambda} + \sum\limits_{i \in I^c(y) \cap \text{supp}(d)} \norm{d_{(i)}}_{0,\lambda} \ge \norm{y}_{0,\lambda} + \underline{\lambda}.
\end{equation*}

\noindent Let $d \in \mathcal{B}_{\infty}(0,r) \backslash S_{I(y)}$,
with $r = \min\left\{ \underline{y},
\frac{\underline{\lambda}}{\norm{\nabla f(y)}_1} \right\}$.  The
convexity of function $f$ and the Holder inequality lead to:
\begin{align}
\label{parti}
 F(y+d) &\ge f(y) + \langle \nabla f(y), d \rangle + \norm{y+d}_{0,\lambda} \nonumber \\
&\ge F(y) - \norm{\nabla f(y)}_1 \norm{d}_{\infty} +
\underline{\lambda} \ge F(y)  \quad \forall y \in \rset^n.
\end{align}
We now assume that $z$ satisfies \eqref{localmin}. For any $x \in
\mathcal{B}_{\infty}(z,r) \cap S_{I(z)}$ we have
$\norm{x-z}_{\infty} < \underline{z}$, which by \eqref{positive}
implies that $\abs{x_{(i)}} > 0$ whenever $\abs{z_{(i)}} > 0$.
Therefore, we get:
\begin{equation*}
 F(x) = f(x) + \norm{x}_{0,\lambda} \ge f(z) + \norm{z}_{0,\lambda} = F(z),
\end{equation*}
and combining with the inequality  \eqref{parti} leads to the second
implication. Furthermore,  if $0 \not \in X_f^*$, then $\nabla f(0)
\not =0$. Assuming that  $\min_{i \in [N]} \lambda_i>0$, then $F(x)
\geq f(0) + \langle \nabla f(0), x \rangle + \norm{x}_{0,\lambda}
\geq F(0) - \norm{\nabla f(0)}_1 \norm{x}_{\infty} + \min_{i \in
[N]} \lambda_i \geq F(0)$ for all $x \in \mathcal{B}_{\infty}
\left(0, \frac{\min_{i \in [N]} \lambda_i}{\norm{\nabla f(z)}_1}
\right)$. If $0 \in X_f^*$, then $\nabla f(0) =0$ and thus $F(x)
\geq f(0) + \langle \nabla f(0), z \rangle + \norm{x}_{0,\lambda}
\geq F(0)$ for all $x \in \rset^n$.
\end{proof}

\noindent From Theorem \ref{lemmaaux} we conclude that  any vector
$z \in \rset^n$ is a local minimizer of problem \eqref{l0regular} if
and only if the following equality holds:
$$ \nabla_{I(z)} f(z)=0.$$
We denote with $\mathcal{T}_f$ the set of all local minima of
problem \eqref{l0regular}, i.e.
\[  \mathcal{T}_f =\left\{  z \in \rset^n:\; \nabla_{I(z)} f(z)=0 \right \}, \]
and we call them \textit{basic local minimizers}.  It is not hard to
see that when the function $f$ is strongly convex, the number of
basic  local minima of problem \eqref{l0regular} is finite,
otherwise we might have an infinite number of basic local
minimizers.


\subsection{Strong local minimizers}

\noindent In this section we introduce a family of strong local
minimizers of problem \eqref{l0regular} based on an approximation of
the function $f$.  It can be easily seen that finding a basic local
minimizer is a trivial procedure e.g.: $(a)$  if we choose some set
of indices $I \subseteq [n]$ such that $\{j \in [n]: j \in
\mathcal{S}_i, \lambda_i=0\} \subseteq I$, then from Theorem
\ref{lemmaaux}  the minimizer of the convex
 problem $\min_{x \in S_I} f(x)$ is a basic local minimizer for problem
\eqref{l0regular}; $(b)$ if we  minimize  the convex function $f$
w.r.t. all blocks $i$ satisfying $\lambda_i=0$, then from Theorem
\ref{lemmaaux}  we obtain again some basic local minimizer for
\eqref{l0regular}. This motivates us to introduce more restricted
classes of local minimizers. Thus, we first define an approximation
version of function $f$ satisfying certain assumptions. In
particular, given $i \in [N]$ and $x \in \rset^n$, the convex
function $u_i:\rset^{n_i} \to \rset^{}$ is an upper bound of
function $f(x+U_i(y_i-x_i))$ if it satisfies:
\begin{equation}
\label{u_upperapr}
 f(x+U_i(y_i-x_i)) \le u_i(y_i;x) \quad \forall y_i \in \rset^{n_i}.
\end{equation}

\noindent We additionally impose the following assumptions on each
function $u_i$.
\begin{assumption}\label{approximation}
The approximation function $u_i$ satisfies the  assumptions: \\
(i) The function $u_i(y_i;x)$ is strictly convex and differentiable
in the first argument, is continuous in the second argument and
satisfies $u_i(x_i;x) = f(x)$  for all
$x \in \rset^n$.\\
(ii) Its gradient in the first argument satisfies $\nabla u_i(x_i;x)
= \nabla_i f(x) \quad
\forall x \in \rset^n$. \\
(iii) For any $x \in \rset^n$, the  function $u_i(y_i;x)$ has
Lipschitz continuous gradient in the first argument with constant
$M_i > L_i$, i.e. there  exists  $M_i>L_i$ such that:
$$\norm{\nabla u_i(y_i;x) - \nabla u_i(z_i;x)} \le M_i \norm{y_i-z_i} \quad
  \forall y_i, z_i \in \rset^{n_i}.$$
(iv) There exists $\mu_i$ such that $0< \mu_i \le M_i - L_i$ and
$$u_i(y_i;x) \ge f(x+U_i(y_i-x_i)) + \frac{\mu_i}{2}\norm{y_i-x_i}^2 \quad \forall x \in \rset^n,
y_i \in \rset^{n_i}.$$
\end{assumption}
Note that a similar set of assumptions has been considered in
\cite{HonWan:13}, where the authors derived a general framework for
the block coordinate descent methods on composite convex problems.
Clearly,   Assumption \ref{approximation} $(iv)$ implies the upper
bound \eqref{u_upperapr} and in \cite{HonWan:13} this inequality  is replaced with
the assumption of strong convexity of $u_i$ in the first argument.

\noindent We now provide several examples of approximation versions
of the objective function $f$ which satisfy  Assumption
\ref{approximation}.

\begin{example}
\label{ex_3u} We now provide three examples of  approximation
versions for the function $f$. The reader can easily find many
other examples of approximations satisfying Assumption \ref{approximation}. \\
\text{1. Separable quadratic approximation}: given $M \in \rset^N$,
such that $M_i > L_i$ for all $i \in [N]$, we define the
approximation version
 $$u_i^q(y_i; x, M_i) = f(x) + \langle \nabla_i f(x), y_i-x_i\rangle +
 \frac{M_i}{2}\norm{y_i-x_i}^2.$$
It satisfies  Assumption \ref{approximation}, in particular
condition $(iv)$  holds for $\mu_i=M_i - L_i$. This type of
approximations was used by Nesterov  for deriving the random
coordinate gradient  descent method for solving smooth convex
problems \cite{Nes:12} and further extended to the composite convex
case in \cite{NecCli:13,RicTac:12}.

\vspace{3pt}

\noindent \text{2. General quadratic approximation}: given $H_i
\succeq 0$, such that $H_i \succ L_iI_{n_i}$  for all $i \in [N]$,
we define the approximation version
 $$u_i^Q(y_i;x,H_i) = f(x) + \langle \nabla_i f(x), y_i-x_i\rangle +
  \frac{1}{2}\langle y_i-x_i, H_i(y_i-x_i)\rangle. $$
 It satisfies  Assumption
\ref{approximation}, in particular condition $(iv)$  holds for
$\mu_i = \sigma_{\min}(H_i - L_i I_{n_i})$ (the smallest
eigenvalue).  This type of approximations was used by Luo, Yun  and
Tseng in deriving the greedy coordinate descent method based on the
Gauss-Southwell rule for solving composite convex problems
\cite{LuoTse:92,LuoTse:93,TseYun:09}.

\vspace{3pt}

\noindent \text{3. Exact approximation}: given $\beta \in \rset^N$,
such that $\beta_i > 0$ for all $i \in [N]$, we define the
approximation version
 $$u_i^{e}(y_i;x,\beta) = f(x+U_i(y_i-x_i)) + \frac{\beta_i}{2}\norm{y_i-x_i}^2.$$
It satisfies  Assumption \ref{approximation},  in particular
condition $(iv)$  holds for $\mu_i = \beta_i$. This type of
approximation functions was used especially in the nonconvex
settings \cite{GriSci:00,HonWan:13}.
 \end{example}

\vspace{3pt}

\noindent Based on each  approximation function $u_i$ satisfying
Assumption \ref{approximation}, we introduce a  class of restricted
local minimizers for our nonconvex optimization problem
\eqref{l0regular}.
\begin{definition}\label{local_min_general}
For any  set of approximation functions $u_i$ satisfying Assumption
\ref{approximation}, a vector $z$ is called an \textit{u-strong
local minimizer} for problem \eqref{l0regular} if it satisfies:
$$  F(z) \le \min\limits_{y_i \in \rset^{n_i}} u_i(y_i;z) + \norm{z+ U_i(y_i-z_i)}_{0,\lambda} \quad \forall i \in [N].$$
Moreover, we denote the set of strong local minima, corresponding to
the approximation functions $u_i$, with $\mathcal{L}_u$.
\end{definition}

\noindent It can be easily seen that  \[ \min_{y_i \in \rset^{n_i}}
u_i(y_i;z) + \norm{z+ U_i(y_i-z_i)}_{0,\lambda}
\overset{y_i=z_i}{\leq} u_i(z_i;z) + \norm{z}_{0,\lambda} = F(z) \]
and thus an u-strong local minimizer  $z \in \mathcal{L}_u$, has the
property that  each block $z_i$ is a fixed point of the operator
defined by the minimizers of the function $u_i(y_i;z) +
\lambda_i\norm{y_i}_0$, i.e. we have for all $i \in [N]$:
$$z_i = \arg\min\limits_{y_i \in \rset^{n_i}} u_i(y_i;z) +
\lambda_i\norm{y_i}_0.$$

\begin{theorem}
Let the set of approximation functions $u_i$ satisfy Assumption
\ref{approximation}, then any $u-$strong local minimizer  is a local
minimum of problem \eqref{l0regular}, i.e. the following inclusion
holds:
\[ \mathcal{L}_u \subseteq \mathcal{T}_f.  \]
\end{theorem}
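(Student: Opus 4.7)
The plan is to show that any $z \in \mathcal{L}_u$ satisfies the basic optimality condition $\nabla_{I(z)} f(z) = 0$, which by the characterization stated right after Theorem \ref{lemmaaux} is precisely the defining property of $\mathcal{T}_f$. The argument will be made block by block, split according to whether $\lambda_i = 0$ or $\lambda_i > 0$.

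First, I would observe that since $U_i(y_i - z_i)$ perturbs only the $i$-th block, the quasinorm splits as
\[ \norm{z + U_i(y_i - z_i)}_{0,\lambda} = \sum_{k \neq i} \lambda_k \norm{z_k}_0 + \lambda_i \norm{y_i}_0, \]
so that Definition \ref{local_min_general} is equivalent to the block-wise statement $z_i \in \arg\min_{y_i \in \rset^{n_i}} u_i(y_i;z) + \lambda_i \norm{y_i}_0$ for every $i \in [N]$. For each block $i$ with $\lambda_i = 0$ the $\ell_0$ term vanishes, so $z_i$ minimizes the strictly convex differentiable function $u_i(\cdot;z)$, yielding $\nabla u_i(z_i;z) = 0$; by Assumption \ref{approximation}(ii) this equals $\nabla_i f(z)$, so $\nabla_{(j)} f(z) = 0$ for every $j \in \mathcal{S}_i$.

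For a block $i$ with $\lambda_i > 0$ and $z_i \neq 0$, I would exploit the fact that $\ell_0$ is locally constant on support-preserving perturbations. Concretely, for any $h \in \rset^{n_i}$ with $\text{supp}(h) \subseteq \text{supp}(z_i)$ and $\norm{h}_\infty < \underline{z_i}$, the inequality \eqref{positive}-style estimate gives $\text{supp}(z_i + h) = \text{supp}(z_i)$, hence $\norm{z_i + h}_0 = \norm{z_i}_0$. The block-wise minimization property then reduces to $u_i(z_i;z) \le u_i(z_i + h;z)$ for all such small $h$, i.e.\ $z_i$ is a local minimizer of $u_i(\cdot;z)$ in the coordinate subspace indexed by $\text{supp}(z_i)$. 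Differentiating at $h = 0$ along each such coordinate and invoking Assumption \ref{approximation}(ii) once more gives $\nabla_{(j)} f(z) = \nabla_{(j)} u_i(z_i;z) = 0$ for every $j \in \text{supp}(z_i)$.

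Combining the two cases covers exactly $I(z) = \text{supp}(z) \cup \{j \in [n]: j \in \mathcal{S}_i,\, \lambda_i = 0\}$, so $\nabla_{I(z)} f(z) = 0$ and thus $z \in \mathcal{T}_f$. The only delicate step is turning the discrete minimization against $\lambda_i \norm{y_i}_0$ into first-order information on $u_i$; this is handled by restricting to perturbations whose support lies inside $\text{supp}(z_i)$, which is the natural neighborhood on which $\norm{\cdot}_0$ behaves like a constant. Everything else is either the defining equivalence of $\mathcal{L}_u$ or a direct application of the smoothness properties of $u_i$ in Assumption \ref{approximation}.
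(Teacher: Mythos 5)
Your proof is correct, and it reaches the same intermediate target as the paper --- namely $\nabla_{I(z)} f(z) = 0$ followed by an appeal to Theorem \ref{lemmaaux} --- but by a genuinely different mechanism for the key step. The paper majorizes $u_i(y_i;z)$ by the quadratic $u_i(z_i;z) + \langle \nabla u_i(z_i;z), y_i - z_i\rangle + \tfrac{M_i}{2}\|y_i - z_i\|^2$ using Assumption \ref{approximation}$(iii)$, then tests the resulting inequality with the single-coordinate gradient step $h_i = -\tfrac{1}{M_i} U_{(j)} \nabla_{(j)} f(z)$ for $j \in I(z)\cap\mathcal{S}_i$; since such an $h_i$ cannot increase $\lambda_i\|z_i+h_i\|_0$ (the coordinate $j$ is either already in the support or belongs to a block with $\lambda_i=0$), this forces $0 \le -\tfrac{1}{2M_i}|\nabla_{(j)} f(z)|^2$. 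Your route instead neutralizes the $\ell_0$ term by restricting to support-preserving perturbations $\|h\|_\infty < \underline{z_i}$ with $\mathrm{supp}(h)\subseteq\mathrm{supp}(z_i)$ (the \eqref{positive}-type estimate), and then invokes plain first-order optimality of the differentiable surrogate $u_i(\cdot;z)$ on that coordinate subspace. This is more elementary in that it uses only Assumption \ref{approximation}$(i)$--$(ii)$ and never touches the Lipschitz constant $M_i$, and it makes transparent why exactly the coordinates in $I(z)$ are controlled; the price is a two-case split ($\lambda_i=0$ versus $\lambda_i>0$) and the restriction to small perturbations, whereas the paper's test point needs no smallness and handles all $j\in I(z)\cap\mathcal{S}_i$ with one formula, reusing the same majorization machinery that drives the descent analysis of (RCD-IHT) later on. Both arguments are complete; your handling of the blockwise splitting of the quasinorm and of the case $z_i=0$, $\lambda_i>0$ (where nothing needs to be shown) is accurate.
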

\begin{proof}
From Definition \ref{local_min_general} and Assumption
\ref{approximation} we have:
\begin{align*}
 F(z) & \le \min\limits_{y_i \in \rset^{n_i}} u_i(y_i;z) + \norm{z+ U_i(y_i-z_i)}_{0,\lambda}\\
&\le \min\limits_{y_i \in \rset^{n_i}} u_i(z_i;z) + \langle \nabla u_i (z_i;z), y_i-z_i \rangle + \frac{M_i}{2}\norm{y_i-z_i}^2+ \norm{z+ U_i(y_i-z_i)}_{0,\lambda}\\
&=  \min\limits_{y_i \in \rset^{n_i}} F(z) + \langle \nabla_i f(z), y_i-z_i \rangle + \frac{M_i}{2}\norm{y_i-z_i}^2+ \lambda_i(\norm{y_i}_0-\norm{z_i}_{0})\\
&\le F(z) + \langle \nabla_i f(z), h_i \rangle +
\frac{M_i}{2}\norm{h_i}^2+
\lambda_i(\norm{z_i+h_i}_0-\norm{z_i}_{0})
\end{align*}
for all $h_i \in \rset^{n_i}$ and  $i \in [N]$. Choosing now $h_i$
as follows:
\[ h_{i} = -\frac{1}{M_i}U_{(j)}\nabla_{(j)} f(z) \;\;\;  \text{for some} \;\; j \in I(z)
\cap \mathcal{S}_i,\] we have from the definition of $I(z)$ that
\[ \lambda_i(\norm{z_i+h_i}_0-\norm{z_i}_{0}) \leq 0 \]
and thus $0 \leq -\frac{1}{2M_i} \|\nabla_{(j)} f(z) \|^2$ or
equivalently $\nabla_{(j)} f(z) = 0$. Since this holds for any $j
\in I(z) \cap \mathcal{S}_i$, it follows that $z$ satisfies
$\nabla_{I(z)} f(z) = 0$. Using now Theorem \ref{lemmaaux} we obtain
our statement.
\end{proof}

\noindent For the three approximation versions given in Example
\ref{ex_3u} we obtain explicit expressions for the corresponding
u-strong local minimizers. In particular, for some $M \in
\rset^N_{++}$ and $i \in [N]$, if we consider the previous separable
quadratic  approximation $u_i^q(y_i;x,M_i)$, then any strong local
minimizer $z \in \mathcal{L}_{u^q}$ satisfies the following
relations:
\begin{enumerate}
\item[\textit{(i)}] $\nabla_{I(z)} f(z) = 0$ and additionally
\item[\textit{(ii)}]
$\begin{cases} \abs{\nabla_{(j)} f(z)} \le \sqrt{2\lambda_{i} M_{i} }, &\text{if} \ z_{(j)}=0 \\
\abs{z_{(j)}} \ge \sqrt{\frac{2\lambda_{i}}{M_{i}}}, &\text{if} \
z_{(j)} \neq 0, \quad \forall i \in [N]$ and $j \in \mathcal{S}_i.
\end{cases}$
\end{enumerate}
 The relations given in $(ii)$ can be derived  based on  the separable
 structure of the approximation $u_i^q(y_i;x,M_i)$ and of the
 quasinorm $\|\cdot\|_0$ using similar arguments as in Lemma 3.2 from \cite{Lu:12}. For completeness, we present the main steps in the derivation. First, it is clear that any $z \in \mathcal{L}_{u^q}$ satisfies:
\begin{equation}\label{u^q_argmin}
z_{(j)} = \arg\min_{y_{(j)} \in \rset^{}} \nabla_{(j)} f(z) (y_{(j)}-z_{(j)}) + \frac{M_i}{2}\abs{y_{(j)}-z_{(j)}}^2
+ \lambda_i \norm{y_{(j)}}_0
\end{equation}
for all $j \in \mathcal{S}_i $  and $ i\in [N]$.  On the other hand since the optimum point in the previous optimization problems can be $0$ or different from $0$, we have:
\begin{align*}
&\min_{y_{(j)} \in \rset^{}} \nabla_{(j)} f(z) (y_{(j)}-z_{(j)}) + \frac{M_i}{2}\abs{y_{(j)}-z_{(j)}}^2 + \lambda_i \norm{y_{(j)}}_0 \\
&=\min \left\{ \frac{M_i}{2}\abs{z_{(j)}- \frac{1}{M_i}\nabla_{(j)} f(z)}^2 - \frac{1}{2M_i}\abs{\nabla_{(j)} f(z)}^2, \lambda_i - \frac{1}{2M_i}\abs{\nabla_{(j)} f(z)}^2 \right\}.
\end{align*}
If $z_{(j)}=0$, then from fixed point relation of  problem \eqref{u^q_argmin} and the expression for its  optimal value we have $\frac{M_i}{2}\abs{z_{(j)}- \frac{1}{M_i}\nabla_{(j)} f(z)}^2 - \frac{1}{2M_i}\abs{\nabla_{(j)} f(z)}^2 \leq \lambda_i - \frac{1}{2M_i}\abs{\nabla_{(j)} f(z)}^2$ and thus
$\abs{\nabla_{(j)} f(z)} \le \sqrt{2\lambda_i M_i}$. Otherwise,  we have $j \in I(z)$  such that  from Theorem \ref{lemmaaux} we have  $\nabla_{(j)} f(z) =0$ and combining with $\frac{M_i}{2}\abs{z_{(j)}- \frac{1}{M_i}\nabla_{(j)} f(z)}^2 - \frac{1}{2M_i}\abs{\nabla_{(j)} f(z)}^2 \geq \lambda_i - \frac{1}{2M_i}\abs{\nabla_{(j)} f(z)}^2$ leads to $\abs{z_{(j)}} \ge \sqrt{\frac{2\lambda_i}{M_i}}$. Similar derivations as above  can be derived for the general quadratic approximations $u_i^Q(y_i;x,H_i)$ provided that $H_i$ is diagonal matrix. For general matrices $H_i$, the corresponding strong local minimizers are fixed points of small $\ell_0$ regularized quadratic problems of dimensions $n_i$.

\noindent Finally, for some $\beta \in \rset^N_{++}$ and $i \in
[N]$, considering the  exact approximation $u_i^{e}(y_i;x,\beta_i)$
we obtain that any corresponding strong local minimizer $z \in
\mathcal{L}_{u^e}$ satisfies:
\begin{equation*}
 z_i = \arg \min\limits_{h_i \in \rset^{n_i}} F(z+ U_ih_i) + \frac{\beta_{i}}{2}\norm{h_i}^2 \quad \forall i \in [N].
\end{equation*}

\begin{theorem}\label{inclusions}
Let  Assumption \ref{assump_grad_1} hold and $u^1, u^2$ be two approximation functions satisfying Assumption \ref{approximation}.
Additionally, let
$$u^1(y_i;x) \le u^2(y_i;x), \quad \forall y_i \in \rset^{n_i}, x \in \rset^n, i \in [N].$$
Then the following inclusions are valid:
$$ \mathcal{X}^* \subseteq \mathcal{L}_{u^1} \subseteq \mathcal{L}_{u^2} \subseteq \mathcal{T}_f.$$
\end{theorem}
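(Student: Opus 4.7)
The plan is to verify the three inclusions one at a time, from right to left in terms of difficulty (the rightmost one is essentially already done).

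For the inclusion $\mathcal{L}_{u^2} \subseteq \mathcal{T}_f$, I would simply invoke the preceding theorem, which states that any $u$-strong local minimizer is a basic local minimum whenever the family $u$ satisfies Assumption \ref{approximation}. Applying this to $u^2$ directly yields the claim.

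For the middle inclusion $\mathcal{L}_{u^1} \subseteq \mathcal{L}_{u^2}$, the strategy is to use monotonicity of the minimum in the objective. Fix $z \in \mathcal{L}_{u^1}$ and $i \in [N]$. The pointwise bound $u^1_i(y_i;z) \le u^2_i(y_i;z)$ implies, after adding the common term $\norm{z+ U_i(y_i-z_i)}_{0,\lambda}$ to both sides and taking the minimum over $y_i \in \rset^{n_i}$, the inequality
\[
\min_{y_i \in \rset^{n_i}} u^1_i(y_i;z) + \norm{z+ U_i(y_i-z_i)}_{0,\lambda} \le \min_{y_i \in \rset^{n_i}} u^2_i(y_i;z) + \norm{z+ U_i(y_i-z_i)}_{0,\lambda}.
\]
Combining this with the defining inequality of $\mathcal{L}_{u^1}$ (Definition \ref{local_min_general}) gives exactly the defining inequality for $\mathcal{L}_{u^2}$. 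Since $i$ was arbitrary, $z \in \mathcal{L}_{u^2}$.

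For the remaining inclusion $\mathcal{X}^* \subseteq \mathcal{L}_{u^1}$, the key is that Assumption \ref{approximation}(iv) (with $\mu_i > 0$) forces $u^1_i$ to majorize $f$ along block $i$, i.e.\ $u^1_i(y_i;z) \ge f(z+ U_i(y_i-z_i))$ for every $y_i$. Adding $\norm{z+ U_i(y_i-z_i)}_{0,\lambda}$ to both sides yields $u^1_i(y_i;z) + \norm{z+ U_i(y_i-z_i)}_{0,\lambda} \ge F(z+ U_i(y_i-z_i))$. If $z \in \mathcal{X}^*$ is a global minimizer of $F$, then $F(z+ U_i(y_i-z_i)) \ge F(z)$ for all $y_i$, hence
\[
\min_{y_i \in \rset^{n_i}} u^1_i(y_i;z) + \norm{z+ U_i(y_i-z_i)}_{0,\lambda} \ge F(z) \quad \forall i \in [N],
\]
which is exactly the condition defining $\mathcal{L}_{u^1}$.

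No part of this argument appears genuinely delicate; the only point that requires care is to recognize that Assumption \ref{approximation}(iv) (rather than a separately stated upper bound) is what supplies the majorization property $u^1_i(y_i;x) \ge f(x+U_i(y_i-x_i))$ needed in the first inclusion. Chaining the three inclusions then gives $\mathcal{X}^* \subseteq \mathcal{L}_{u^1} \subseteq \mathcal{L}_{u^2} \subseteq \mathcal{T}_f$, as stated.
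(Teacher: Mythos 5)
Your proof is correct and follows essentially the same route as the paper's: the first inclusion via the majorization $u^1_i(y_i;z)\ge f(z+U_i(y_i-z_i))$ from Assumption \ref{approximation}(iv) together with global optimality of $z$, and the middle inclusion via pointwise monotonicity of the minimized objective (the paper phrases this through the explicit minimizer $t_i$ of the $u^2$-subproblem, which is the same argument). The only cosmetic difference is that you explicitly cite the preceding theorem for $\mathcal{L}_{u^2}\subseteq\mathcal{T}_f$, which the paper leaves implicit.
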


\begin{proof}
Assume $z \in \mathcal{X}^*$, i.e. it is a global minimizer of our
original nonconvex problem \eqref{l0regular}. Then, we have:
\begin{align*}
F(z) &\le \min\limits_{y_i \in \rset^{n_i}} F(z + U_i(y_i-z_i)) \\
& = \min\limits_{y_i \in \rset^{n_i}} f(z+U_i(y_i-z_i)) + \lambda_i \norm{y_i}_0 + \sum\limits_{j \neq i}\lambda_j \norm{z_j}_0\\
&\le \min\limits_{y_i \in \rset^{n_i}} u_i^1(y_i;z)
+\norm{z+U_i(y_i-z_i)}_{0,\lambda} \quad \forall i\in [N],
\end{align*}
and thus $z \in \mathcal{L}_{u^1}$, i.e.  we  proved that
$\mathcal{X}^* \subseteq \mathcal{L}_{u^1}$. Therefore, any class of
$u$-strong local minimizers contains the global minima of problem
\eqref{l0regular}.

\noindent Further, let us  take  $z \in \mathcal{L}_{u^1}$. Using
Definition \eqref{local_min_general} and defining  \[ t_i =
\arg\min\limits_{y_i \in \rset^{n_i}} u_i^2(y_i;z) +
\norm{z+U_i(y_i-z_i)}_{0,\lambda}, \] we get:
\begin{align*}
 F(z) &\le \min\limits_{y_i \in \rset^{n_i}} u_i^1(y_i;z) + \norm{z+U_i(y_i-z_i)}_{0,\lambda}\\
&\le u_i^1(t_i;z) + \norm{z+U_i(t_i-z_i)}_{0,\lambda}\\
&\le u_i^2(t_i;z) + \norm{z+U_i(t_i-z_i)}_{0,\lambda}\\
&= \min\limits_{y_i \in \rset^{n_i}} u_i^2 (y_i;z) +
\norm{z+U_i(y_i-z_i)}_{0,\lambda}.
\end{align*}
This shows that $z \in \mathcal{L}_{u^2}$ and thus
$\mathcal{L}_{u^1} \subseteq \mathcal{L}_{u^2}$.
\end{proof}

\noindent Note that if the following inequalities hold \[ (L_i +
\beta_i) I_{n_i} \preceq H_i \preceq M_i I_{n_i} \quad \forall i \in
[N],\] using the Lipschitz gradient relation
\eqref{Lipschitz_gradient}, we obtain that
$$u_i^{e}(y_i;x,\beta_i) \le u_i^{Q}(y_i;x,H_i) \leq  u_i^{q}(y_i;x,M_i) \quad \forall x \in \rset^n,
y_i \in \rset^{n_i}.$$ \noindent Therefore, from Theorem
\ref{inclusions} we observe that $u^{q} \ (u^{Q})$-strong local
minimizers for problem \eqref{l0regular} are included in the class
of all basic local minimizers $\mathcal{T}_f$. Thus, designing an
algorithm which converges to a local minimum from
$\mathcal{L}_{u^q}$ ($\mathcal{L}_{u^Q}$)  will be of interest.
Moreover,  $u^e$-strong local minimizers for problem
\eqref{l0regular} are included in the class of all $u^{q} \
(u^Q)$-strong local minimizers. Thus, designing an algorithm which
converges to a local minimum from $\mathcal{L}_{u^{e}}$ will be of
interest. To illustrate the relationships between the previously
defined classes of restricted local minima and see how much they are
related to global minima of \eqref{l0regular}, let us consider an
example.

\begin{example}
\label{classes_points} We consider the least square settings $f(x) =
\norm{Ax-b}^2$, where $A \in \rset^{m \times n}$ and $b \in \rset^m$
satisfying:
\begin{equation*}
 A = \begin{bmatrix}
 1 & \alpha_1 &\cdots &\alpha_1^n \\
 1 &\alpha_2 &\cdots &\alpha_2^n \\
 1 &\alpha_3 &\cdots &\alpha_3^n \\
 1 &\alpha_4 &\cdots &\alpha_4^n \\
\end{bmatrix}
+ \left[p I_4 \quad O_{4,n-4}  \right], \qquad b=q e,
\end{equation*}
with $e \in \rset^{4}$ the vector having all entries $1$. We choose
the following parameter values: $\alpha = [1 \ 1.1 \ 1.2 \ 1.3]^T,
n=7, p=3.3, q=25, \lambda=1$ and $\beta_i=0.0001$ for all $i\in
[n]$. We further consider the scalar case, i.e. $n_i =1$ for all
$i$. In this case we have that $u_i^q=u_i^Q$, i.e. the separable and
general quadratic approximation versions coincide. The results are
given in Table \ref{local_min}. From $128$ possible local minima, we
found $19$ local minimizers in $\mathcal{L}_{u^{q}}$ given by
$u^q_i(y_i;x,L_f)$, and only $6$ local minimizers in
$\mathcal{L}_{u^q}$ given by $u^q_i(y_i;x,L_i)$. Moreover, the class
of $u^{e}$-strong local minima $\mathcal{L}_{u^{e}}$ given by
$u^e_i(y_i;x,\beta_i)$ contains only one vector which is also the
global optimum of problem \eqref{l0regular}, i.e. in this case
$\mathcal{L}_{u^{e}} = \mathcal{X}^*$. From Table \ref{local_min} we
can clearly see that the newly introduced classes of local
minimizers  are much more restricted (in the sense of having small
number of elements,  close to that of  the set of global minimizers)
than the class of basic local minimizers that is much larger.

\setlength{\extrarowheight}{5pt}
\renewcommand{\tabcolsep}{4pt}
\begin{center}
\begin{table}[ht]
\begin{center}\caption{Strong local minima distribution on a least square example.}
\label{local_min}
\begin{tabular}{|c|c|c|c|c|}
\hline \textbf{Class of local minima} & $\mathcal{T}_f$ &
$\overset{\mathcal{L}_{u^{q}}}{u^q_i(y_i;x,L_f)}$
& $\overset{\mathcal{L}_{u^{q}}}{u^q_i(y_i;x,L_i)}$ & $\overset{\mathcal{L}_{u^{e}}}{u^e_i(y_i;x,\beta_i)}$  \\
\hline
\textbf{Number of local minima} & 128 & 19 & 6 & 1  \\
\hline
\end{tabular}
\end{center}
\end{table}
\end{center}
\end{example}


\section{Random coordinate descent type methods}

In this section we present a family of random block coordinate descent methods
suitable for solving the class of problems \eqref{l0regular}. The family of the algorithms we consider takes a very general form, consisting in the minimization of a certain approximate
version of the objective function one block variable at a time, while fixing the
rest of the block variables. Thus,  these algorithms are a combination between  an iterative hard thresholding scheme and a general random coordinate descent method and they are particularly suited for solving nonsmooth $\ell_0$ regularized problems since they solve an easy low dimensional problem at each iteration, often in closed form. Our family of methods covers particular cases such as random block coordinate gradient descent and random proximal coordinate descent methods.

\noindent Let $x \in \rset^n$ and  $i \in
[N]$. Then, we introduce the following \textit{thresholding map} for a given approximation version $u$ satisfying Assumption \ref{approximation}:
\begin{align*}
 T^{u}_i(x) &=
 \arg\min\limits_{y_i \in \rset^{n_i}} u_i(y_i;x) + \lambda_i\norm{y_i}_{0}.
\end{align*}

\noindent In order to find a  local minimizer of problem
\eqref{l0regular}, we introduce the family of  \textit{random block coordinate
descent  iterative hard thresholding} (RCD-IHT) methods, whose
iteration  is described as follows:
\begin{algorithm}{{\bf RCD-IHT}}
\begin{itemize}
\item[1.]  Choose $ x^0 \in \rset^n$   and approximation version $u$ satisfying Assumption \ref{approximation}. For  $k \ge 0$ do:

\item[2.]  Choose a (block) coordinate  $i_k \in [N]$ with uniform probability

\item[3.]  Set  $x^{k+1}_{i_k} = T^{u}_{i_k}(x^k)$ and $x^{k+1}_i=x^k_i \;\; \forall i \neq i_k$.
\end{itemize}
\end{algorithm}

\vspace{3pt}

\noindent Note that our algorithm is  directly dependent on the
choice of approximation $u$ and the computation of the operator
$T^{u}_{i}(x)$ is in general easy, sometimes even in closed form.
For example, when $u_i(y_i;x) = u_i^q(y_i;x,M_i)$ and $\nabla_{i_k}
f(x^k)$ is available, we can easily compute the closed form solution
of $T^{u}_{i_k}(x^k)$ as in the iterative hard thresholding schemes
\cite{Lu:12}. Indeed, if  we define $\Delta^i(x) \in \rset^{n_i}$ as
follows:
\begin{align}
\label{deltaq} (\Delta^i(x))_{(j)} = \frac{M_i}{2} \abs{x_{(j)}-
(1/M_i)\nabla_{(j)} f(x)}^2,
\end{align}
then  the iteration of (RCD-IHT) method becomes:
\begin{equation*}
x^{k+1}_{(j)} =
\begin{cases}
x^k_{(j)} - \frac{1}{M_{i_k}}\nabla_{(j)}f(x^k), & \text{if} \quad
 (\Delta^{i_k}(x^k))_{(j)}\ge \lambda_{i_k}\\
0, &\text{if} \quad (\Delta^{i_k}(x^k))_{(j)}\le \lambda_{i_k},
\end{cases}
\end{equation*}
for all $j \in \mathcal{S}_{i_k}$.  Note that if at some iteration
$\lambda_{i_k}=0$, then the iteration of algorithm (RCD-IHT) is
identical with the iteration of the usual \textit{random block
coordinate gradient  descent method} \cite{NecCli:13,Nes:12}.
Further,  our algorithm  has, in this case, similarities with the iterative hard
thresholding algorithm (IHTA) analyzed  in \cite{Lu:12}. For
completeness, we also present the algorithm (IHTA).

\begin{algorithm}{{IHTA}} \cite{Lu:12}
\begin{itemize}
\item[1.]  Choose $ M_f > L_f$. For  $k \ge 0$ do:

\item[2.] $x^{k+1} = \arg\min_{y \in \rset^{n}} f(x^k) +
\langle \nabla f(x^k), y - x^k \rangle + \frac{M_f}{2}\norm{ y -
x^k}^2 + \norm{y}_{0,\lambda} $,
\end{itemize}
\end{algorithm}
or equivalently for each component we have the update:
\begin{equation*}
x^{k+1}_{(j)} =
\begin{cases}
x^k_{(j)} - \frac{1}{M_{f}}\nabla_{(j)}f(x^k), & \text{if} \quad
\frac{M_f}{2} \abs{x^k_{(j)} - \frac{1}{M_{f}}
\nabla_{(j)}f(x^k)}^2 \ge \lambda_{i}\\
    0, &\text{if} \quad \frac{M_f}{2} \abs{x^k_{(j)} - \frac{1}{M_{f}}\nabla_{(j)}f(x^k)}^2 \le
     \lambda_{i},
\end{cases}
\end{equation*}
for all $j \in \mathcal{S}_i$ and $i \in [N].$ Note that the
arithmetic complexity of computing the next iterate $x^{k+1}$ in
(RCD-IHT), once $\nabla_{i_k}f(x^k)$ is known, is of order
$\mathcal{O}(n_{i_k})$, which is much lower than the arithmetic
complexity per iteration $\mathcal{O}(n)$ of (IHTA) for $N >> 1$,
that additionally requires the computation of full gradient $\nabla
f(x^k)$.  Similar derivations as above  can be derived for the
general quadratic approximations $u_i^Q(y_i;x,H_i)$ provided that
$H_i$ is diagonal matrix. For general matrices $H_i$, the
corresponding algorithm requires solving small $\ell_0$ regularized
quadratic problems of dimensions $n_i$.

\noindent Finally, in the particular case when we consider the exact
approximation  $u_i(y_i;x)=u_i^{e}(y_i;x,\beta_i)$, at  each
iteration of our algorithm we need to perform an exact minimization
of the objective function $f$ w.r.t. one randomly chosen (block)
coordinate. If $\lambda_{i_k} = 0$, then the iteration of algorithm
(RCD-IHT) requires solving a small dimensional subproblem with a
strongly convex objective function  as in the classical
\textit{proximal block  coordinate descent method} \cite{HonWan:13}.
In the case when $\lambda_{i_k}>0$ and $n_i > 1$, this subproblem is
nonconvex and usually hard to solve. However, for certain particular
cases of the function $f$ and $n_i = 1$ (i.e. scalar case $n=N$), we
can easily compute the  solution of the small dimensional subproblem
in algorithm (RCD-IHT). Indeed, for $x \in \rset^n$ let us define:
\begin{align}
\label{deltae} v^i(x) & = x + U_i h_i(x), \ \text{where} \ h_i(x) =
\arg\min\limits_{h_i \in \rset^{}}
f(x + U_ih_i) + \frac{\beta_i}{2}\norm{h_i}^2  \nonumber\\
\Delta^i(x) &= f(x-U_ix_i) + \frac{\beta_i}{2}\norm{x_i}^2 -
f(v^{i}(x)) - \frac{\beta_i}{2}\norm{(v^{i}(x))_i - x_i}^2 \quad
\forall i \in [n].
\end{align}
Then, it can be  seen that the iteration of (RCD-IHT) in the scalar
case for the exact approximation $u_i^{e}(y_i;x,\beta_i)$  has the
following form:
\begin{equation*}
x^{k+1}_{i_k} =
\begin{cases}
(v^{i_k}(x^k))_{i_k}, \ &\text{if} \ \Delta^{i_k}(x^k) \ge \lambda_{i_k} \\
 0 , \ &\text{if} \ \Delta^{i_k}(x^k) \le \lambda_{i_k}.
\end{cases}
\end{equation*}
In general, if the function $f$ satisfies Assumption
\ref{assump_grad_1}, computing $v^{i_k}(x^k)$ at each iteration of
(RCD-IHT) requires the  minimization of an unidimensional convex
smooth function, which can be  efficiently performed using
unidimensional search algorithms. Let us analyze the least squares
settings in order to highlight the simplicity of the iteration of
algorithm (RCD-IHT)  in the scalar case for the approximation
$u_i^{e}(y_i;x,\beta_i)$.

\begin{example}
Let $A \in \rset^{m \times n}, b \in \rset^m$ and $f(x) =
\frac{1}{2}\norm{Ax-b}^2$. In this case (recall that we consider
$n_i=1$ for all $i$) we have the following expression for
$\Delta^{i}(x)$:
$$\Delta^{i}(x) =\frac{1}{2}\norm{r-A_{i}x_{i}}^2+\frac{\beta_{i}}{2}\norm{x_{i}}^2
- \frac{1}{2}\left\lVert
r\left(I_m-\frac{A_{i}A_{i}^T}{\norm{A_{i}}^2+\beta_{i}}\right)\right\rVert^2-\frac{\beta_{i}}{2}\left
\lVert \frac{A_{i}^Tr}{\norm{A_{i}}^2+\beta_{i}}\right\rVert^2,$$
where $r = Ax-b$. Under these circumstances, the iteration of
(RCD-IHT) has the following closed  form expression:
\begin{equation}
x^{k+1}_{i_k}=
\begin{cases}
x^k_{i_k} - \frac{A_{i_k}^Tr^k}{\norm{A_{i_k}}^2+\beta_{i_k}}, \
&\text{if} \
\Delta^{i_k}(x^k) \ge \lambda_{i_k}   \\
 0 , \ &\text{if} \ \Delta^{i_k}(x^k) \le \lambda_{i_k}.
\end{cases}
\end{equation}
\end{example}

\noindent In the sequel  we use the following notations for the
entire history of index choices, the expected value of objective
function $f$ w.r.t. the entire history and for the support of the
sequence $x^k$:
\begin{equation*}
 \xi^k = \{i_0, \dots, i_{k-1} \}, \qquad f^k=\mathbb{E}[f(x^k)], \qquad I^k=I(x^k).
\end{equation*}

\noindent Due to the randomness of algorithm (RCD-IHT),
at any iteration $k$ with $\lambda_{i_k} > 0$, the
sequence $I^k$ changes if one of the following situations holds
for some $j \in \mathcal{S}_{i_k}$:
\begin{align*}
(i)& \ x^k_{(j)} = 0 \ \text{and} \ (T^u_{i_k}(x^k))_{(j)} \neq 0 \\
(ii)& \ x^k_{(j)} \neq 0 \ \text{and} \ (T^u_{i_k}(x^k))_{(j)} = 0.
\end{align*}

\noindent In other terms, at a given moment $k$ with
$\lambda_{i_k}>0$, we expect  no change in the sequence $I^k$ of
algorithm (RCD-IHT) if there is no index $j \in \mathcal{S}_{i_k}$
satisfying the above corresponding set of relations $(i)$ and
$(ii)$. We define the  notion of \textit{change of $I^k$ in
expectation} at iteration $k$, for algorithm (RCD-IHT) as follows:
let $x^k$ be the sequence generated by (RCD-IHT), then the sequence
$I^k=I(x^k)$ changes in expectation if the following situation
occurs:
\begin{equation}\label{expectation}
\mathbb{E}[\abs{I^{k+1} \setminus I^k} + \abs{I^k \setminus I^{k+1}}
\ | \  x^k] > 0,
\end{equation}
which implies (recall that we consider uniform probabilities for the
index selection):
\begin{align*}
\mathbb{P}\left(\abs{I^{k+1} \setminus I^k} + \abs{I^k \setminus
I^{k+1}} > 0 \ | \  x^k \right) \ge \frac{1}{N}.
\end{align*}

\noindent In the next section we show that there is a finite  number
of changes of  $I^k$  in expectation generated by algorithm
(RCD-IHT) and then, we prove  global convergence of this algorithm,
in particular we show that the limit points of the generated
sequence  converges to strong local minima from the class of points
$\mathcal{L}_{u}$.


\section{Global convergence analysis}
\noindent In this section we analyze the descent properties of the
previously introduced family of coordinate descent algorithms under
Assumptions \ref{assump_grad_1} and \ref{approximation}. Based on
these properties, we establish the nature of the limit points of the
sequence generated by Algorithm (RCD-IHT). In particular, we derive
that any accumulation point of this sequence  is almost surely a
local minimum which belongs to the class $\mathcal{L}_u$. Note that
the classical results for any iterative algorithm used for solving
general  nonconvex problems state global convergence to stationary
points, while for the $\ell_0$ regularized  nonconvex and NP-hard
problem \eqref{l0regular} we show that our family of algorithms have
the property that  the  generated  sequences converge to strong
local minima.

\noindent In order to prove almost sure convergence results for our
family of algorithms, we use the following supermartingale
convergence lemma of Robbins and Siegmund (see e.g.
\cite{PatNec:14}):
\begin{lemma}
\label{mart} Let $v_k, u_k$ and $\alpha_k$ be three sequences of
nonnegative random variables satisfying the following conditions:
\[ \mathbb{E}[v_{k+1} | {\cal F}_k] \leq (1+\alpha_k) v_k - u_k
\;\; \forall k \geq 0 \;\; \text{a.s.} \;\;\;  \text{and} \;\;\;
\sum_{k=0}^\infty \alpha_k < \infty \;\;  \text{a.s.}, \]  where
${\cal F}_k$ denotes the collections $v_0, \dots, v_k, u_0, \dots,
u_k$, $\alpha_0, \dots, \alpha_k$. Then, we have $\lim_{k \to
\infty} v_k = v$ for a random variable $v \geq 0$ a.s. \;  and \;
$\sum_{k=0}^\infty u_k < \infty$ a.s.
\end{lemma}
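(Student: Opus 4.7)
The plan is to convert the given almost-supermartingale recursion into a genuine nonnegative supermartingale by dividing out the accumulated multiplicative factor, and then to invoke Doob's forward convergence theorem. Specifically, I would set $A_0 = 1$ and $A_{k+1} = \prod_{j=0}^{k}(1+\alpha_j)$; because $\mathcal{F}_k$ is assumed to contain $\alpha_0, \ldots, \alpha_k$, the variable $A_{k+1}$ is $\mathcal{F}_k$-measurable and strictly positive. The hypothesis $\sum_k \alpha_k < \infty$ a.s., combined with the elementary inequality $\log(1+\alpha_k)\le \alpha_k$, forces the product $A_k$ to converge almost surely to a finite positive random variable $A_\infty$; in particular $\sup_k A_k \le A_\infty < \infty$ a.s. Define the rescaled sequences $\tilde v_k = v_k/A_k$ and $\tilde u_k = u_k/A_{k+1}$, both nonnegative.

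Next I would divide the hypothesis by $A_{k+1}$ and pull the $\mathcal{F}_k$-measurable factor out of the conditional expectation to obtain
$$\mathbb{E}[\tilde v_{k+1}\mid \mathcal{F}_k]\le \frac{(1+\alpha_k)v_k - u_k}{A_{k+1}} = \tilde v_k - \tilde u_k\quad \text{a.s.}$$
Hence the process $M_k := \tilde v_k + \sum_{j=0}^{k-1}\tilde u_j$ satisfies $\mathbb{E}[M_{k+1}\mid \mathcal{F}_k]\le M_k$ a.s., so it is a nonnegative supermartingale. By the classical nonnegative supermartingale convergence theorem, $M_k$ converges almost surely to some nonnegative, a.s.\ finite random variable $M_\infty$.

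Since $\sum_{j=0}^{k-1}\tilde u_j$ is monotone nondecreasing in $k$, it has an almost sure limit in $[0,+\infty]$; writing $\tilde v_k = M_k - \sum_{j=0}^{k-1}\tilde u_j \ge 0$ and using that $M_k$ has a finite a.s.\ limit forces $\sum_{j=0}^{\infty}\tilde u_j < \infty$ a.s., and therefore $\tilde v_k \to \tilde v_\infty$ a.s.\ for some finite nonnegative $\tilde v_\infty$. Multiplying back by $A_k$ yields $v_k = A_k \tilde v_k \to A_\infty \tilde v_\infty =: v \ge 0$ a.s., while $\sum_k u_k = \sum_k A_{k+1}\tilde u_k \le A_\infty \sum_k \tilde u_k < \infty$ a.s., giving both claimed conclusions.

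The main delicate point is the measurability step: one has to exploit that $\alpha_k \in \mathcal{F}_k$ in order to treat $A_{k+1}$ as known when conditioning on $\mathcal{F}_k$, which is precisely why the filtration is defined to include $\alpha_0,\dots,\alpha_k$ rather than only $\alpha_0,\dots,\alpha_{k-1}$. A secondary technicality is that the nonnegative supermartingale convergence theorem requires no integrability of $M_0$, so no moment assumption on the initial iterate $v_0$ is needed; the entire argument is carried out on the full-measure event where $\sum_k\alpha_k<\infty$, and the statement imposes no conclusion outside that event.
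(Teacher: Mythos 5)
Your proof is correct. Note, however, that the paper does not actually prove this lemma: it is quoted as the classical Robbins--Siegmund almost-supermartingale convergence result with a citation, so there is no in-paper argument to compare against. Your rescaling by $A_{k+1}=\prod_{j=0}^{k}(1+\alpha_j)$ followed by Doob's convergence theorem is precisely the standard proof of that result, and every step checks out: $A_{k+1}$ is indeed $\mathcal{F}_k$-measurable, $A_k\uparrow A_\infty<\infty$ a.s.\ since $\sum_j\log(1+\alpha_j)\le\sum_j\alpha_j<\infty$, and the identity $A_{k+1}=(1+\alpha_k)A_k$ turns the hypothesis into $\mathbb{E}[\tilde v_{k+1}\mid\mathcal{F}_k]\le\tilde v_k-\tilde u_k$. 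The one point worth making explicit is your closing remark about integrability of $M_0$: the convergence theorem for nonnegative supermartingales without an integrability hypothesis is justified by localizing on the events $\{M_0\le n\}\in\mathcal{F}_0$, on each of which $M_k\mathbf{1}_{\{M_0\le n\}}$ is an integrable nonnegative supermartingale; since these events exhaust the probability space, almost sure convergence follows. With that detail spelled out, the argument is complete.
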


\noindent Further, we analyze the convergence properties of
algorithm (RCD-IHT). First, we derive a descent inequality for this
algorithm.

\begin{lemma}
\label{descent_ramiht}
Let $x^k$ be the sequence generated by (RCD-IHT) algorithm. Under
Assumptions \ref{assump_grad_1}  and \ref{approximation} the
following descent inequality holds:
\begin{align}
 \label{decrease}
\mathbb{E}[F(x^{k+1})\;|\; x^k] \le F(x^k) -
\mathbb{E}\left[\frac{\mu_{i_k}}{2}\norm{x^{k+1}-x^k}^2 \;|\; x^k
\right].
\end{align}
\end{lemma}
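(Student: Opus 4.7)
The plan is to prove the pointwise descent inequality
\[
F(x^{k+1}) \le F(x^k) - \frac{\mu_{i_k}}{2}\norm{x^{k+1}-x^k}^2
\]
deterministically, conditional on $x^k$ and the draw $i_k$, and then take conditional expectation with respect to $i_k$ to obtain \eqref{decrease}. Since only block $i_k$ is updated, we have $x^{k+1}=x^k + U_{i_k}(x^{k+1}_{i_k}-x^k_{i_k})$, which in particular makes $\norm{x^{k+1}-x^k}=\norm{x^{k+1}_{i_k}-x^k_{i_k}}$.

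The first key step is to use Assumption \ref{approximation}$(iv)$ with $y_{i_k}=x^{k+1}_{i_k}$ to bound the smooth part:
\[
u_{i_k}(x^{k+1}_{i_k};x^k) \ge f(x^{k+1}) + \frac{\mu_{i_k}}{2}\norm{x^{k+1}_{i_k}-x^k_{i_k}}^2 = f(x^{k+1}) + \frac{\mu_{i_k}}{2}\norm{x^{k+1}-x^k}^2.
\]
The second key step is to use the defining property of the thresholding map $T^u_{i_k}$: by definition $x^{k+1}_{i_k}$ minimizes $u_{i_k}(y_{i_k};x^k)+\lambda_{i_k}\norm{y_{i_k}}_0$, so comparing with the feasible choice $y_{i_k}=x^k_{i_k}$ and invoking Assumption \ref{approximation}$(i)$, namely $u_{i_k}(x^k_{i_k};x^k)=f(x^k)$, yields
\[
u_{i_k}(x^{k+1}_{i_k};x^k) + \lambda_{i_k}\norm{x^{k+1}_{i_k}}_0 \le f(x^k) + \lambda_{i_k}\norm{x^k_{i_k}}_0.
\]

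The third step is just bookkeeping on the $\ell_0$ regularizer. Because all blocks other than $i_k$ are unchanged between $x^k$ and $x^{k+1}$, the separable structure of $\norm{\cdot}_{0,\lambda}$ gives
\[
\norm{x^{k+1}}_{0,\lambda} - \norm{x^k}_{0,\lambda} = \lambda_{i_k}\bigl(\norm{x^{k+1}_{i_k}}_0 - \norm{x^k_{i_k}}_0\bigr).
\]
Substituting this into the previous inequality and chaining with the lower bound from Assumption \ref{approximation}$(iv)$ cancels the $u_{i_k}$ terms and yields exactly
\[
f(x^{k+1}) + \frac{\mu_{i_k}}{2}\norm{x^{k+1}-x^k}^2 + \norm{x^{k+1}}_{0,\lambda} \le f(x^k) + \norm{x^k}_{0,\lambda},
\]
which is $F(x^{k+1}) + \tfrac{\mu_{i_k}}{2}\norm{x^{k+1}-x^k}^2 \le F(x^k)$. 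Taking $\mathbb{E}[\,\cdot \,|\,x^k]$ (over the uniform choice of $i_k$) produces \eqref{decrease}.

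There is no real obstacle here; the only subtle point to watch is that the lower bound in Assumption \ref{approximation}$(iv)$ is stated in terms of $f$ evaluated at $x+U_i(y_i-x_i)$, so one must be careful to pair it with the ``change of variable'' $x^{k+1}=x^k+U_{i_k}(x^{k+1}_{i_k}-x^k_{i_k})$ and with the separability of $\norm{\cdot}_{0,\lambda}$ across blocks, so that the non-updated blocks disappear cleanly from both sides before taking the expectation.
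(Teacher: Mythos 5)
Your proof is correct and follows essentially the same route as the paper: the lower bound from Assumption \ref{approximation}$(iv)$ at $y_{i_k}=x^{k+1}_{i_k}$, the optimality of the thresholding map compared against $y_{i_k}=x^k_{i_k}$ together with $u_{i_k}(x^k_{i_k};x^k)=f(x^k)$, and finally conditional expectation over the uniform choice of $i_k$. The only difference is presentational — you spell out the block-separability bookkeeping for $\norm{\cdot}_{0,\lambda}$ that the paper leaves implicit.
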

\begin{proof}
From Assumption \ref{approximation}  we have:
\begin{align*}
F(x^{k+1}) + \frac{\mu_{i_k}}{2}\norm{x^{k+1}_{i_k}-x_{i_k}^k}^2 &\le
u_{i_k}(x^{k+1}_{i_k}, x^k) + \norm{x^{k+1}}_{0,\lambda} \\
&\le u_{i_k}(x^{k}_{i_k}, x^k) + \norm{x^{k}}_{0,\lambda} \\
&\le f(x^k) + \norm{x^{k}}_{0,\lambda} = F(x^k).
\end{align*}
In conclusion, our family of algorithms belong to the class of
descent methods:
\begin{align}
\label{decrease_iter}
F(x^{k+1}) & \le F(x^k) -
\frac{\mu_{i_k}}{2}\norm{x^{k+1}_{i_k}-x_{i_k}^k}^2.
\end{align}
Taking expectation w.r.t. $i_k$ we get our descent inequality.
\end{proof}

\noindent We now prove the global convergence of the sequence
generated by algorithm (RCD-IHT) to local minima which belongs to
the restricted set of local minimizers  $\mathcal{L}_u$.

\begin{theorem}
\label{convergence_rpamiht} Let $x^k$ be the sequence generated by
algorithm (RCD-IHT). Under Assumptions \ref{assump_grad_1} and
\ref{approximation} the following statements hold:

\noindent $(i)$ There exists a scalar $\tilde{F}$ such that:
$$ \lim\limits_{k \to \infty} F(x^{k})=  \tilde{F}  \ a.s. \quad
\text{and} \quad \lim\limits_{k \to \infty} \norm{x^{k+1}-x^k} = 0 \
a.s.$$

\noindent $(ii)$ At each change of sequence $I^k$ in expectation we
have the following relation:
$$\mathbb{E}\left[\frac{\mu_{i_k}}{2}\norm{x^{k+1}-x^k}^2 \;|\; x^k \right] \ge \delta,$$
where $\delta= \frac{1}{N}\min\left\{\min\limits_{i \in [N]:
\lambda_i>0} \frac{\mu_i\lambda_i }{M_i}, \min\limits_{i \in [N], j
\in {\mathcal S}_i \cap \text{supp}(x^0)} \frac{ \mu_i}{2}
|x^0_{(j)}|^2 \right\} > 0.$

\noindent $(iii)$ The sequence $I^k$ changes a finite number of
times as $k \to \infty$ almost surely. The sequence $\norm{x^k}_0$
converges to some $\norm{x^*}_0$ almost surely. Furthermore, any
limit point of the sequence $x^k$ belongs to the class of strong
local minimizers $\mathcal{L}_u$ almost surely.
\end{theorem}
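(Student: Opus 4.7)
The plan is to build on the descent Lemma~\ref{descent_ramiht} and feed it into the Robbins--Siegmund lemma (Lemma~\ref{mart}), then to combine the resulting almost-sure summability with the quantitative lower bound of part~$(ii)$ to force eventual support stabilization. Once the support is frozen, a limit argument inside a fixed linear subspace identifies the accumulation points as fixed points of the thresholding maps $T^u_i$, i.e.~as elements of $\mathcal{L}_u$.

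For $(i)$ I would apply Lemma~\ref{mart} with the nonnegative supermartingale $v_k = F(x^k)-f^*$, $\alpha_k = 0$, and $u_k = \mathbb{E}\!\left[\frac{\mu_{i_k}}{2}\|x^{k+1}-x^k\|^2 \mid x^k\right]$, whose hypothesis is exactly \eqref{decrease}; this produces $F(x^k)\to \tilde F$ a.s.~and $\sum_k u_k <\infty$ a.s. For the step-size limit I work pathwise: the deterministic inequality \eqref{decrease_iter} telescopes to $\sum_k \frac{\mu_{i_k}}{2}\|x^{k+1}-x^k\|^2 \le F(x^0)-\tilde F <\infty$ a.s., so $\|x^{k+1}-x^k\|\to 0$ a.s.

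For $(ii)$ I would condition on a change in expectation at step $k$: then there exists a block $i$ with $\lambda_i>0$ and a coordinate $j\in\mathcal{S}_i$ where $T^u_i(x^k)$ and $x^k_i$ disagree in support. The threshold analysis carried out around \eqref{u^q_argmin} shows that in the ``create-a-nonzero'' case the jump satisfies $|x^{k+1}_{(j)}-x^k_{(j)}|^2 \ge 2\lambda_i/M_i$, whereas in the ``destroy-a-nonzero'' case the same threshold applied to the last iteration at which coordinate $j$ was made nonzero gives $|x^k_{(j)}|^2 \ge \min\{2\lambda_i/M_i,\,|x^0_{(j)}|^2\}$ (the second alternative being relevant only when $j\in\text{supp}(x^0)$). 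Multiplying by $\mu_i/2$, minimizing over $(i,j)$ and paying the $1/N$ factor from the uniform selection of a change-triggering block yields the claimed $\delta$.

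For $(iii)$, the summability from $(i)$ combined with the $\delta$-lower bound from $(ii)$ forces only finitely many changes in expectation to occur a.s., so a random $k_0$ exists with $I^k \equiv I^{k_0}=:I^*$ for all $k\ge k_0$ and $\|x^k\|_0$ eventually constant. The same threshold estimate also gives $|x^k_{(j)}|\ge c > 0$ for every $k\ge k_0$ and every $j\in\text{supp}(x^k)$ lying in a block with $\lambda_i>0$, so every limit point $x^*$ of $x^k$ has matching support on these blocks and $\|\cdot\|_{0,\lambda}$ is locally constant along the tail. Fixing $i\in[N]$, using that block $i$ is selected infinitely often and that $\|x^{k+1}-x^k\|\to 0$, I would extract a subsequence $\kappa_m\to\infty$ with $i_{\kappa_m}=i$ and $x^{\kappa_m}\to x^*$, and then pass to the limit in
\[ u_i(T^u_i(x^{\kappa_m});x^{\kappa_m}) + \lambda_i\|T^u_i(x^{\kappa_m})\|_0 \le u_i(y_i;x^{\kappa_m}) + \lambda_i\|y_i\|_0 \qquad\forall y_i\in\rset^{n_i}, \]
using joint continuity of $u_i$ (Assumption~\ref{approximation}), the stability of the support and the uniform lower bound $c$ to force both sides to converge to their values at $x^*$; the limit inequality is precisely $x^*_i = T^u_i(x^*)$, i.e.~$x^*\in\mathcal{L}_u$. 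The main obstacle is exactly this last step: the thresholding map $T^u_i$ is discontinuous in $x$ because of the $\ell_0$ contribution, and the argument depends on marrying the a.s.~support stabilization, the uniform lower bound on nonzero components, and a careful extraction of the subsequence $\kappa_m$ (the geometric waiting time until block $i$ is selected combined with $\|x^{k+1}-x^k\|\to 0$ a.s.) so that the $\|\cdot\|_0$ contributions on both sides become locally constant along the tail and the passage to the limit reduces to a purely continuous optimization argument.
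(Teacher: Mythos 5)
Your proposal follows essentially the same route as the paper: Robbins--Siegmund applied to the descent inequality plus the telescoped pathwise bound for $(i)$, the threshold lower bound $|(T^u_i(x))_{(j)}|^2 \ge 2\lambda_i/M_i$ on nonzero components created by the algorithm (together with the untouched components inherited from $x^0$) for $(ii)$, and finitely many support changes followed by passage to the limit in the block fixed-point inequality, using the uniform lower bound on nonzero entries to freeze the $\ell_0$ terms, for $(iii)$. The only detail to supply is that for a general approximation $u_i$ the threshold bound is obtained not from the explicit formula \eqref{u^q_argmin} (which is specific to $u^q$) but by comparing $T^u_i(x)$ with its $j$-zeroed version and combining the optimality condition $\nabla_{(j)} u_i = 0$ with the $M_i$-Lipschitz gradient of $u_i$ from Assumption \ref{approximation}$(iii)$, exactly as the paper does.
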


\begin{proof}
\noindent $(i)$  From the descent inequality given in Lemma
\eqref{descent_ramiht} and Lemma \ref{mart} we  have that there
exists a scalar $\tilde{F}$ such that  $\lim_{k \to \infty}
F(x^{k})=  \tilde{F}$ almost sure. Consequently, we also have
$\lim_{k \to \infty} F(x^k) - F(x^{k+1}) = 0$ almost sure and since
our method is  of descent type, then from \eqref{decrease_iter} we
get $\frac{\mu_{i_k}}{2}\norm{x^{k+1} - x^k}^2 \le F(x^k) -
F(x^{k+1})$, which leads to $\lim_{k \to \infty} \norm{x^{k+1}-x^k}
= 0$ almost sure.

\noindent $(ii)$ For simplicity of the notation we denote $x^{+} =
x^{k+1}, x = x ^k $ and $ i=i_k$. First, we show that any nonzero
component of the sequence generated by (RCD-IHT) is bounded below by
a positive constant. Let $x \in \rset^n$ and $i \in [N]$. From
definition of $T^{u}_i(x)$,  for any $j \in
\text{supp}(T^{u}_i(x))$, the  $j$th component of the minimizer
$T^{u}_i(x)$ of the function $u_i(y_i;x) + \lambda_i\norm{y_i}_{0}$
is denoted  $(T^{u}_i(x))_{(j)}$. Let us define  $y^+ = x +
U_i(T^{u}_i(x)-x_i)$. Then,  for any $j \in \text{supp}(T^{u}_i(x))$
the following optimality condition holds:
\begin{align}
\label{nablaui} \nabla_{(j)} u_i(y^+_i;x)=0.
\end{align}

\noindent On the other hand, given $j \in \text{supp}(T^{u}_i(x))$, from the definition of $T^{u}_i(x)$ we get:
\begin{align*}
u_i(y^+_i;x) + \lambda_i \norm{y^+_i}_0 \le
u_i(y^{+}_i-U_{(j)}y^+_{(j)}; x) + \lambda_i
\norm{y^{+}_i-U_{(j)}y^{+}_{(j)}}_0.
\end{align*}
Subtracting $\lambda_i \norm{y^{+}_i-U_{(j)}y^{+}_{(j)}}_{0}$ from
both sides, leads to:
\begin{equation} \label{ineq_iter}
u_i(y^+_i;x) + \lambda_i \le  u_i(y^{+}_i-U_{(j)}y^+_{(j)}; x) .
\end{equation}
Further, if we apply the Lipschitz gradient relation given in
Assumption \ref{approximation} $(iii)$ in the right hand side  and
use the optimality conditions for the unconstrained problem solved
at each iteration, we get:
\begin{align*}
u_i(y^{+}_i-U_{(j)}y^+_{(j)}; x) &\le u_i(y^{+}_i;x) - \langle \nabla_{(j)} u_i(y^{+}_i;x),y^{+}_{(j)}\rangle\
 + \frac{M_{i}}{2} |y^{+}_{(j)}|^2\\
& \overset{\eqref{nablaui}}{=} u_i(y^{+}_i;x) + \frac{M_{i}}{2}
|y^{+}_{(j)}|^2.
\end{align*}
Combining with the left hand side of \eqref{ineq_iter} we get:
\begin{equation}\label{bound_x_k}
|(T^{u}_i(x))_{(j)}|^2 \ge \frac{2\lambda_{i}}{M_{i}} \qquad \forall
j \in \text{supp}(T^{u}_i(x)).
\end{equation}
Replacing $x = x^k$ for $k \ge 0$, it can be easily seen that, for
any $j \in \text{supp}(x^k_i)$ and $i \in [N]$, we have:
\begin{equation*}
\abs{x^k_{(j)}}^2
\begin{cases}
\ge \frac{2\lambda_{i}}{M_i}, &\text{if} \quad x^k_{(j)} \neq 0 \quad \text{and} \quad i \in \xi^k \\
= \abs{x^{0}_{(j)}}^2, &\text{if} \quad x^k_{(j)} \neq 0 \quad
\text{and} \quad i \notin \xi^k.
\end{cases}
\end{equation*}

\noindent Further, assume that at some iteration $k > 0$ a change of
sequence $I^k$ in expectation occurs. Thus, there is an index $j \in
[n]$ (and block $i$ containing $j$) such that either $\left(
x^k_{(j)}=0 \ \text{and} \ \left(T^{u}_i(x^{k})\right)_{(j)} \neq
0\right)$ or $\left( x^k_{(j)} \neq 0 \ \text{and} \
\left(T^{u}_i(x^{k})\right)_{(j)} = 0\right)$. Analyzing these cases
we have:
\begin{equation*}
\norm{T^{u}_i(x^k)-x^k_i}^2 \ge \left |
\left(T^{u}_i(x^k)\right)_{(j)}-x^k_{(j)} \right |^2 \;\;
\begin{cases}
 \ge \frac{2\lambda_{i}}{M_{i}} &\text{if} \quad x^k_{(j)} = 0 \\
\ge \frac{2\lambda_{i}}{M_{i}} &\text{if} \quad x^k_{(j)} \neq 0 \ \text{and} \ i \in \xi^k \\
= |x^{0}_{(j)}|^2 &\text{if} \quad x^k_{(j)} \neq 0 \ \text{and} \ i
\notin \xi^k.
\end{cases}
\end{equation*}

\noindent Observing that under uniform probabilities we have:
$$\mathbb{E}\left[\frac{\mu_{i_k}}{2}\norm{x^{k+1}-x^k}^2| x^k \right] =
\frac{1}{N}\sum\limits_{i=1}^N\frac{\mu_i}{2}\norm{T^{u}_i(x^k)-x^k_i}^2,$$
we can conclude that at each change of sequence $I^k$ in expectation we get:
\begin{align*}
\mathbb{E}\left[\frac{\mu_{i_k}}{2}\norm{x^{k+1}-x^k}^2 | x^k\right]  \ge \frac{1}{N}\min\left\{\min\limits_{i \in [N]: \lambda_i>0} \frac{\mu_i\lambda_i }{M_i},
 \min\limits_{i \in [N], j \in {\mathcal S}_i \cap \text{supp}(x^0)} \frac{ \mu_i}{2}
  |x^0_{(j)}|^2 \right\}.
\end{align*}

\noindent $(iii)$ From $\lim\limits_{k \to \infty}
\norm{x^{k+1}-x^k} = 0$ a.s. we have $\lim\limits_{k \to \infty}
\mathbb{E}\left[\norm{x^{k+1}-x^k} \ | \  x^k\right] = 0$ a.s. On
the other hand from part $(ii)$ we have that if the sequence $I^k$
changes in expectation,  then $\mathbb{E}[\norm{x^{k+1}-x^{k}}^2 \ |
\ x^{k}] \ge \delta > 0.$ These facts imply that there are a finite
number of changes in expectation of sequence $I^k$, i.e. there exist
$K>0$ such that for any $k > K$ we have $I^k = I^{k+1}$.

\noindent Further, if the sequence $I^k$ is constant for $k > K$,
then we have $I^k=I^*$ and $\norm{x^k}_{0,\lambda} =
\norm{x^*}_{0,\lambda}$ for any vector $x^*$ satisfying
$I(x^*)=I^*$. Also, for $k> K$ algorithm (RCD-IHT) is equivalent
with the classical random coordinate descent method
\cite{HonWan:13}, and thus shares its convergence properties, in
particular any limit point of the sequence $x^k$ is a minimizer  on
the coordinates $I^*$  for $\min_{x \in S_{I^*}} f(x)$. Therefore,
if the sequence $I^k$ is fixed, then we have for any $k > K$  and
$i_k \in I^k$:
\begin{equation}\label{fixedsupp}
u_{i_k}(x^{k+1}_{i_k};x^k)+ \norm{x^{k+1}}_{0,\lambda} \le
u_{i_k}(y_{i_k};x^k)+\norm{x^{k} +
U_{i_k}(y_{i_k}-x^k_{i_k})}_{0,\lambda} \quad \forall y_{i_k} \in
\rset^{n_{i_k}}.
\end{equation}

\noindent On the other hand, denoting with $x^*$ an accumulation
point of $x^k$, taking limit in \eqref{fixedsupp} and  using that
$\norm{x^k}_{0,\lambda} = \norm{x^*}_{0,\lambda}$ as $k \to \infty$,
we obtain the following relation:
$$F(x^*)\le \min_{y_{i} \in \rset^{n_i}}  u(y_i;x^*)+\norm{x^*+U_i(y_i-x^*_i)}_{0,\lambda} \quad a.s.$$
for all $i \in [N]$ and thus $x^*$ is the minimizer of  the previous
right hand side expression. Using the definition of local minimizers
from the set $\mathcal{L}_u$, we conclude that any limit point $x^*$
of the sequence $x^k$ belongs to this set, which proves our
statement.
\end{proof}

\vspace{3pt}

\noindent It is  important to note that the classical results for
any iterative algorithm used for solving nonconvex problems usually
state global convergence to stationary points, while for our
algorithms we were able to prove global convergence to local minima
of our nonconvex and NP-hard problem \eqref{l0regular}. Moreover, if
$\lambda_i=0$ for all $i \in [N]$, then the optimization problem
\eqref{l0regular} becomes convex and we see that our convergence
results cover also this setting.


\section{Rate of convergence analysis}
\noindent In this section we prove the linear convergence  in
probability of the random coordinate descent algorithm (RCD-IHT)
under the additional assumption of  strong convexity for function
$f$ with parameter $\sigma$ and for the scalar case, i.e. we assume
$n_i=1$ for all $i \in [n] =[N]$. Note that, for algorithm (RCD-IHT)
the scalar case is the most practical since it requires solving a
simple unidimensional convex subproblem, while for $n_i > 1$ it
requires the solution of a small NP-hard subproblem at each
iteration. First, let us recall that complexity results of  random
block coordinate descent methods for solving convex problems $f^*
=\min_{x \in \rset^n} f(x)$,  under convexity and Lipschitz gradient
assumptions on the objective function, have been derived e.g. in
\cite{HonWan:13}, where the authors showed sublinear rate of
convergence  for a general class of coordinate descent methods.
Using a similar reasoning as in \cite{HonWan:13,NecPat:14a}, we
obtain that the randomized version of the general block coordinate
descent method, in the strongly convex case, presents a linear rate
of convergence in expectation of the~form:
\begin{equation*}
\mathbb{E}[f(x^{k}) - f^*] \le \left(1-\theta\right)^k
\left(f(x^{0}) - f^*\right),
\end{equation*}
where $\theta \in (0,1)$. Using the strong convexity property for $f$ we have:
\begin{equation}\label{rcd_rate_of_conv2}
\mathbb{E}\left[\norm{x^k - x^*} \right] \le \left(1- \theta\right)^{k/2}
 \sqrt{\frac{2}{\sigma}\left(f(x^{0}) - f^* \right)}  \quad \forall x \in
 X_f^*,
\end{equation}
where we recall that we denote $X_f^* = \arg \min_{x \in \rset^n}
f(x)$. For attaining an $\epsilon$-suboptimality this algorithm has
to perform the following number of iterations:
\begin{equation}\label{rcd_complexity2}
k \ge \frac{2}{\theta} \log
\frac{1}{\epsilon}\sqrt{\frac{2\left(f(x^0)-f^*\right)}{\sigma}}.
\end{equation}

\noindent In order to  derive the rate of convergence in probability
for algorithm (RCD-IHT), we first define the following notion which
is a generalization of relations \eqref{deltaq} and \eqref{deltae}
for $u_i(y_i,x) = u_i^q(y_i,x,M_i)$ and $u_i(y_i,x) =
u_i^e(y_i,x,\beta_i)$, respectively:
\begin{align}
\label{deltau1} & v^i(x) = x + U_i(h_i(x) - x_i), \quad \text{where}
\quad h_i(x) = \arg\min\limits_{y_i \in \rset^{}} u_i(y_i;x) \\
&\Delta^i(x) = u_i(0;x) - u_i(h_i(x);x). \label{deltau2}
\end{align}
We make the following assumption on functions $u_i$ and consequently
on  $\Delta^i(x)$:

\begin{assumption}
\label{assump_delta}
There exist some positive constants $C_i$ and $D_i$ such that the
approximation functions $u_i$ satisfy for all $i \in [n]$:
$$\abs{\Delta^i(x) - \Delta^i(z)} \le C_i\norm{x-z} + D_i\norm{x-z}^2  \quad \forall x \in
\rset^n, z \in \mathcal{T}_f$$
and
$$ \min_{z \in \mathcal{T}_f} \min\limits_{i \in [n]} \abs{\Delta^i(z) - \lambda_i} >0. $$
\end{assumption}

\noindent Note that if $f$ is strongly convex, then the set
$\mathcal{T}_f$ of basic local minima has a finite number of
elements. Next, we show that this assumption holds for the most
important approximation functions $u_i$ (recall that $u_i^q =u_i^Q$
in the scalar case $n_i=1$).
\begin{lemma}\label{delta_xy}
Under Assumption \ref{assump_grad_1}   the following statements
hold:\\
$(i)$ If we consider the separable quadratic approximation
$u_i(y_i;x) = u_i^q(y_i;x,M_i)$,~then:
$$ \abs{\Delta^i(x) - \Delta^i(z)} \le M_i v^i_{\max}\left(1 +
 \frac{L_f}{M_i}\right)\norm{x - z} + \frac{M_i}{2}\left(1 + \frac{L_f}{M_i}\right)^2 \norm{x - z}^2,$$
for all $x \in \rset^n$  and $z \in \mathcal{T}_f$, where we have
defined  $v^i_{\max}$ as follows $v^i_{\max} = \max
\{\norm{(v^i(y))_i} :\; y \in \mathcal{T}_f\}$ for all $i \in [n]$. \\
$(ii)$ If we consider the exact approximation $u_i(y_i;x) =
u_i^e(y_i;x,\beta_i)$, then we have:
\[  \abs{\Delta^i(x) - \Delta^i(z)} \le \gamma^i\norm{x-z} +
\frac{L_f+\beta_i}{2} \norm{x-z}^2,\] for all $x \in \rset^n$  and
$z \in \mathcal{T}_f$, where we have defined $\gamma^i$ as follows
$\gamma^i = \max\{\norm{\nabla f(y-U_iy_i)}+ \norm{\nabla f(v^i(y))}
+ \beta_i\norm{y_i}: \; y \in \mathcal{T}_f \}$  for all $i \in
[n]$.
\end{lemma}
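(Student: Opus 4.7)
For Part (i), the proof will be a direct calculation. Under $u_i^q(y_i;x,M_i)$ the minimizer is explicit, $h_i(x)=x_i-\frac{1}{M_i}\nabla_i f(x)$, and substituting into \eqref{deltau2} reduces $\Delta^i(x)$ to the closed form $\frac{M_i}{2}\abs{(v^i(x))_i}^2$. I would then factor the difference of squares $\abs{(v^i(x))_i}^2-\abs{(v^i(z))_i}^2$ into a product (sum)$\times$(difference), bound the difference factor by $(1+L_f/M_i)\norm{x-z}$ using the triangle inequality together with Assumption~\ref{assump_grad_1}, and bound the sum factor by $2v^i_{\max}+(1+L_f/M_i)\norm{x-z}$ using $z\in\mathcal{T}_f$. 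Multiplying and collecting terms yields the announced estimate.

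Part (ii) requires a more delicate argument, since $\Delta^i$ depends on $x$ only implicitly, through the minimizer $h_i(x)$. My plan is to write $\Delta^i(x)=u_i^e(0;x)-\psi(x)$, where $\psi(x)=\min_{y_i}u_i^e(y_i;x)$, and to exploit convexity of \emph{both} pieces. The key structural observation is that the joint function $g(x,w)=u_i^e(w;x)=f(x+U_i(w-x_i))+\frac{\beta_i}{2}(w-x_i)^2$ is jointly convex in $(x,w)$: the first term is the composition of the convex $f$ with an affine map in $(x,w)$, and the second is a rank-one positive semidefinite quadratic form. Hence $\psi=\min_w g(\cdot,w)$ is convex, as is $u_i^e(0;\cdot)$. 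Moreover, a direct Hessian computation shows that for every fixed $w$ the map $x\mapsto u_i^e(w;x)$ has Lipschitz continuous gradient with constant at most $L_f+\beta_i$.

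Combining convexity and the Lipschitz gradient would then yield, for each fixed $w$, the sandwich $0\le u_i^e(w;x)-u_i^e(w;z)-\nabla_x u_i^e(w;z)^T(x-z)\le\frac{L_f+\beta_i}{2}\norm{x-z}^2$. Specializing to $w=0$ gives the same bound for $u_i^e(0;\cdot)$ with linear coefficient $\tilde V=\nabla u_i^e(0;z)=(I-U_iU_i^T)\nabla f(z-U_iz_i)+\beta_i U_i z_i$. Specializing to $w=h_i(z)$, together with the one-sided optimality $\psi(x)\le u_i^e(h_i(z);x)$, the identity $\psi(z)=u_i^e(h_i(z);z)$, and the envelope relation $\nabla_x u_i^e(h_i(z);z)=\nabla f(v^i(z))$, yields the upper side of the analogous sandwich for $\psi$; the lower side is simply convexity of $\psi$. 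Subtracting the two sandwiches---each centered remainder lies in $[0,\frac{L_f+\beta_i}{2}\norm{x-z}^2]$---would give $\abs{\Delta^i(x)-\Delta^i(z)-\nabla\Delta^i(z)^T(x-z)}\le\frac{L_f+\beta_i}{2}\norm{x-z}^2$, while the linear part is controlled by $\norm{\nabla\Delta^i(z)}=\norm{\tilde V-\nabla f(v^i(z))}\le\norm{\nabla f(z-U_iz_i)}+\beta_i\abs{z_i}+\norm{\nabla f(v^i(z))}\le\gamma^i$.

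The hardest step will be the upper half of the sandwich for $\psi$: because $\psi$ is defined implicitly, one cannot directly invoke a Lipschitz constant for $\nabla\psi$, and establishing global Lipschitz continuity of the auxiliary map $v^i$ is in fact problematic. My plan circumvents this by passing from $\psi$ to the \emph{explicit} function $u_i^e(h_i(z);\cdot)$ via $\psi(x)\le u_i^e(h_i(z);x)$, applying the explicit sandwich there, and using the envelope identity $\nabla\psi(z)=\nabla f(v^i(z))$ to match the linear terms; convexity of $\psi$ then supplies the complementary lower bound at no extra cost.
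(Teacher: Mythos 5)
Your part (i) is the paper's proof verbatim in spirit: the closed form $\Delta^i(x)=\tfrac{M_i}{2}\norm{(v^i(x))_i}^2$, the difference-of-squares factorization, the bound $\norm{(v^i(x))_i-(v^i(z))_i}\le(1+L_f/M_i)\norm{x-z}$, and the sum factor bounded via $v^i_{\max}$ --- nothing to add there.

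Your part (ii), however, takes a genuinely different and in my view cleaner route. The paper splits $\Delta^i(x)-\Delta^i(z)$ into two pieces $\delta_{1i}$ and $\delta_{2i}$ and estimates $\abs{\delta_{1i}}=\max\{\delta_{1i},-\delta_{1i}\}$ by four separate hand-crafted inequalities: a lower bound on $f(v^i(x))$ obtained by combining convexity of $f$ with the optimality condition for $v^i(z)$ and completing a square, an upper bound on $f(x-U_ix_i)$ from the Lipschitz-gradient inequality, and two symmetric counterparts using the suboptimal test point $x+U_i((v^i(z))_i-z_i)$. You instead write $\Delta^i=u_i^e(0;\cdot)-\psi$ with $\psi(x)=\min_{w}u_i^e(w;x)$, observe that $g(x,w)=u_i^e(w;x)$ is jointly convex so that $\psi$ is convex, verify the envelope identity $\nabla\psi(z)=\nabla_x g(z,h_i(z))=\nabla f(v^i(z))$ (your computation checks out: the projector $I-U_iU_i^T$ plus the optimality condition $\beta_i(h_i(z)-z_i)=-\nabla_i f(v^i(z))$ reassemble the full gradient), and then apply the standard two-sided ``sandwich'' for convex functions with $(L_f+\beta_i)$-Lipschitz gradient to both $u_i^e(0;\cdot)$ and --- via the majorization $\psi(x)\le u_i^e(h_i(z);x)$ touching at $z$ --- to $\psi$. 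Subtracting the two sandwiches isolates the linear term $(\tilde V-\nabla f(v^i(z)))^T(x-z)$, whose norm is exactly the quantity maximized in the definition of $\gamma^i$ (the projection being nonexpansive), and confines the remainder to $\pm\tfrac{L_f+\beta_i}{2}\norm{x-z}^2$. This reproduces the paper's bound with the same constants. What your approach buys is conceptual economy: the two directions of the estimate come for free from convexity below and the explicit majorant above, the role of $v^i(z)$ is packaged once in the envelope identity rather than re-derived in each of the four inequalities, and the argument generalizes immediately to other approximations $u_i$ for which the joint map $(x,w)\mapsto u_i(w;x)$ is convex. What the paper's computation buys is that it never needs to discuss differentiability of the marginal function $\psi$; in your write-up you should make explicit the one-line touching argument showing $\partial\psi(z)$ is the singleton $\{\nabla_x g(z,h_i(z))\}$ (or simply note that a subgradient suffices for the lower half of the sandwich), but this is a presentational point, not a gap.
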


\begin{proof}
$(i)$ For the separable quadratic approximation $u_i(y_i;x) =
u_i^q(y_i;x,M_i)$, using the definition of $\Delta^i(x)$ and
$v^i(x)$ given in \eqref{deltau1}--\eqref{deltau2} (see also
\eqref{deltaq}), we get:
\begin{align}
\label{deltaqq} \Delta^i(x) =
\frac{M_i}{2}\norm{x_i-\frac{1}{M_i}\nabla_if(x)}^2 =
\frac{M_i}{2}\norm{(v^i(x))_i}^2.
\end{align}

\noindent Then, since $\norm{\nabla_i f(x) - \nabla_i f(z)} \leq L_f
\norm{x-z}$ and using the property of the norm $|\norm{a} -
\norm{b}| \leq \norm{a-b}$ for any two vectors $a$ and $b$, we
obtain:
\begin{align*}
\abs{\Delta^i(x) -\Delta^i(z)} &= \frac{M_i}{2}\left\lvert \norm{(v^i(x))_i}^2 - \norm{(v^i(z))_i}^2\right\lvert \\
& \le \frac{M_i}{2}\left\lvert \norm{(v^i(x))_i} - \norm{(v^i(z))_i}\right\lvert \; \left\lvert \norm{(v^i(x))_i} + \norm{(v^i(z))_i}\right\lvert \\
& \overset{\eqref{deltaqq}}{\le} \frac{M_i}{2}\left(1 +
\frac{L_f}{M_i}\right)\norm{x - z} \left( 2\norm{(v^i(z))_i} +
\left(1 + \frac{L_f}{M_i}\right)\norm{x - z}\right).
\end{align*}

\noindent $(ii)$ For the exact approximation $u_i(y_i;x) =
u_i^e(y_i;x,\beta_i)$, using the definition of $\Delta^i(x)$ and
$v^i(x)$ given in \eqref{deltau1}--\eqref{deltau2} (see also
\eqref{deltae}),  we get:
\[ \Delta^i(x) = f(x-U_ix_i) - f(v^i(x)) +
\frac{\beta_i}{2}\norm{x_i}^2 -  \frac{\beta_i}{2}\norm{(v^i(x))_i -
x_i}^2. \] Then,  using the triangle inequality we derive the
following relation:
\begin{align*}
\abs{\Delta^i(x) - \Delta^i(z)} &\le \Big |f(x-U_ix_i)- f(z-U_iz_i) + f(v^i(z)) - f(v^i(x))\\
&\;\;\;\; + \frac{\beta_i}{2}\norm{(v^i(z))_i - z_i}^2 -
\frac{\beta_i}{2}\norm{(v^i(x))_i - x_i}^2 \Big |
+\Big\lvert\frac{\beta_i}{2}\norm{x_i}^2 -
\frac{\beta_i}{2}\norm{z_i}^2\Big\rvert.
\end{align*}

\noindent For simplicity, we denote:
\begin{align*}
\delta_{1i}(x,z)= &f(x-U_ix_i)- f(z-U_iz_i) + f(v^i(z)) - f(v^i(x)) \\
 & \;\; + \frac{\beta_i}{2}\norm{(v^i(z))_i - z_i}^2 - \frac{\beta_i}{2}\norm{(v^i(x))_i - x_i}^2\\
\delta_{2i}(x,z)=&\frac{\beta_i}{2}\norm{x_i}^2 -
\frac{\beta_i}{2}\norm{z_i}^2.
\end{align*}
\noindent In order to bound $\Delta^i(x) - \Delta^i(z)$, it is
sufficient to find  upper bounds on $\abs{\delta_{1i}(x,z)}$ and
$\abs{\delta_{2i}(x,z)}$. For a bound on $\abs{\delta_{1i}(x,z)}$ we
use $\abs{\delta_{1i}(x,y)} =
\max\{\delta_{1i}(x,y),-\delta_{1i}(x,y)\}$. Using the optimality
conditions for the map  $v^i(x)$ and convexity of $f$ we obtain:
\begin{align*}
 f(v^i(x))  & \ge f(v^i(z)) + \langle \nabla f(v^i(z)), v^i(x) - v^i(z)\rangle \nonumber\\
&= \! f(v^i(z)) \!+\! \langle \nabla f(v^i(z)), x \!-\! z\rangle
\!+\! \langle \nabla_i f(v^i(z)), ((v^i(x))_i \!-\! x_i)
 \!-\! ((v^i(z))_i \!-\! z_i)\rangle \nonumber\\
&=\! f(v^i(z)\!) \!+\! \langle \nabla f(v^i(z)\!), x \!-\! z\rangle
\!-\! \beta_i \langle (v^i(z)\!)_i \!-\! z_i,((v^i(x)\!)_i \!-\!
x_i)\!-\! ((v^i(z)\!)_i \!-\! z_i) \rangle \nonumber\\
&= f(v^i(z)) + \langle \nabla f(v^i(z)), x-z\rangle + \frac{\beta_i}{2}\norm{(v^i(z))_i - z_i}^2  \nonumber\\
 &\qquad  + \frac{\beta_i}{2}\norm{(v^i(z))_i-z_i}^2 - \beta_i \langle (v^i(z))_i -z_i, (v^i(x))_i-x_i\rangle \nonumber\\
&=f(v^i(z)) + \langle \nabla f(v^i(z)), x-z\rangle + \frac{\beta_i}{2}\norm{(v^i(z))_i-z_i}^2 \nonumber \\
 &\qquad + \frac{\beta_i}{2}\norm{(v^i(z))_i-z_i - ((v^i(x))_i-x_i)}^2 - \frac{\beta_i}{2}\norm{(v^i(x))_i - x_i}^2 \nonumber\\
& \ge \! f(v^i(z)) \!+\! \frac{\beta_i}{2}\norm{(v^i(z))_i \!-\!
z_i}^2 \!-\!  \frac{\beta_i}{2}\norm{(v^i(x))_i \!-\!  x_i}^2 \!-\!
\norm{\nabla f(v^i(z))}\norm{x \!-\! z}, 
\end{align*}
where in the last inequality we  used the Cauchy-Schwartz
inequality. On the other hand, from the global Lipschitz continuous
gradient inequality we get:
\begin{equation*}
f(x-U_ix_i)  \le f(z-U_iz_i) + \norm{\nabla f(z-U_iz_i)}\norm{x-z} +
\frac{L_f}{2}\norm{x-z}^2.
\end{equation*}
\noindent From previous two relations  we obtain:
\begin{equation}\label{delta}
\delta_{1i}(x,z) \le \left( \norm{\nabla f(z-U_iz_i)}+ \norm{\nabla
f(v^i(z))} \right) \norm{x-z} + \frac{L_f}{2}\norm{x-z}^2.
\end{equation}

\noindent In order to obtain a bound on $-\delta_{1i}(x,z)$ we
observe that:
\begin{align}
& f(v^i(x)) + \frac{\beta_i}{2}\norm{(v^i(x))_i-x_i}^2 - f(v^i(z)) - \frac{\beta_i}{2}\norm{(v^i(z))_i
-z_i}^2 \nonumber\\
&\quad  \le f(x + U_i((v^i(z))_i-z_i)) - f(v^i(z)) \nonumber \\
& \quad \le \norm{\nabla f(v^i(z))}\norm{x-z} +
\frac{L_f}{2}\norm{x-z}^2,\label{bound_aux3}
\end{align}
where in the last inequality we  used the Lipschitz gradient
relation and Cauchy-Schwartz inequality. Also, from the convexity of
$f$ and the Cauchy-Schwartz inequality we get:
\begin{equation}\label{bound_aux4}
f(x-U_ix_i) \ge f(z-U_iz_i) - \norm{\nabla f(z-U_iz_i)}\norm{x-z}.
\end{equation}
Combining now the bounds \eqref{bound_aux3} and \eqref{bound_aux4}
we obtain:
\begin{equation}\label{-delta}
 -\delta_{1i}(x,z)\le \left(\norm{\nabla f(z-U_iz_i)}+ \norm{\nabla f(v^i(z))}\right)\norm{x-z} + \frac{L_f}{2}\norm{x-z}^2.
\end{equation}
Therefore, from \eqref{delta} and \eqref{-delta} we obtain a bound
on $\delta_{1i}(x,z)$:
\begin{equation}\label{delta1abs}
\abs{\delta_{1i} (x,z)}\le \left( \norm{\nabla f(z-U_iz_i)}+
\norm{\nabla f(v^i(z))} \right) \norm{x-z} +
\frac{L_f}{2}\norm{x-z}^2.
\end{equation}

\noindent Regarding the second quantity $\delta_{2i}(x,z)$, we
observe that:
\begin{align}
 \abs{\delta_{2i}(x,z)} &= \frac{\beta_i}{2} \Big\lvert\norm{x_i}+\norm{z_i}
 \Big\rvert \Big\lvert \norm{x_i}-\norm{z_i}\Big\rvert
  = \frac{\beta_i}{2}\Big\lvert \norm{x_i}-\norm{z_i}+2\norm{z_i}\Big\rvert
   \Big\lvert\norm{x_i}-\norm{z_i}\Big\rvert \nonumber\\
 & \le \frac{\beta_i}{2}\left(\norm{x - z}+2\norm{z_i}\right) \norm{x - z}.\label{delta2abs}
\end{align}
From the upper bounds on $\abs{\delta_{1i} (x,z)}$ and
$\abs{\delta_{2i} (x,z)}$ given in  \eqref{delta1abs} and
\eqref{delta2abs}, respectively,  we obtained our result.
\end{proof}

\noindent We further show  that  the second part of Assumption \ref{assump_delta} holds for the most important  approximation functions $u_i$.
\begin{lemma}\label{lemma_alpha}
Under Assumption \ref{assump_grad_1}  the following statements
hold:\\
$(i)$ Considering the separable quadratic approximation $u_i(y_i;x)
= u_i^q(y_i;x,M_i)$, then for any fixed $z \in \mathcal{T}_f$
there exist only two values of parameter $M_i$ satisfying $\abs{\Delta^i(z) - \lambda_i} = 0$.\\
$(ii)$ Considering the exact approximation $u_i(y_i;x) =
u_i^e(y_i;x,\beta_i)$, then for any fixed $z \in \mathcal{T}_f$,
there exists a unique $\beta_i$ satisfying $\abs{\Delta^i(z) - \lambda_i} = 0$.\\
\end{lemma}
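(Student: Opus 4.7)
My approach is to insert the explicit formulas for $\Delta^i(z)$ obtained earlier into the equation $\Delta^i(z)=\lambda_i$ and count, for each fixed $z\in\mathcal{T}_f$ and index $i$, the solutions in the free parameter ($M_i$ in part $(i)$, $\beta_i$ in part $(ii)$). Throughout I would exploit the defining property $\nabla_{I(z)}f(z)=0$ of $\mathcal{T}_f$, which in the scalar setting $n_i=1$ produces the clean dichotomy: either $z_i\neq 0$ (so $i\in I(z)$ and $\nabla_i f(z)=0$) or $z_i=0$ with $\lambda_i>0$ (where $\nabla_i f(z)$ may be arbitrary).

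For part $(i)$ I would start from the identity $\Delta^i(z) = \frac{M_i}{2}\bigl|z_i - \frac{1}{M_i}\nabla_i f(z)\bigr|^2$ already recorded in \eqref{deltaqq}. Setting this equal to $\lambda_i$ and multiplying through by $2 M_i$ gives
\[
z_i^2\, M_i^2 \;-\; 2\bigl(z_i\,\nabla_i f(z) + \lambda_i\bigr) M_i \;+\; (\nabla_i f(z))^2 \;=\; 0,
\]
a polynomial in $M_i$ of degree at most two, hence at most two roots; if $z_i=0$ the leading coefficient vanishes and only one root survives, which still respects the bound.

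For part $(ii)$ I split on whether $z_i=0$. If $z_i\neq 0$, then $i\in I(z)$ forces $\nabla_i f(z)=0$, and the first-order condition for the strictly convex problem defining $h_i(z)$ yields $h_i(z)=z_i$ and $v^i(z)=z$; the formula \eqref{deltae} then collapses to $\Delta^i(z) = f(z-U_iz_i)-f(z) + \frac{\beta_i}{2}z_i^2$, which is affine in $\beta_i$ with nonzero slope $z_i^2/2$, producing a unique solution of $\Delta^i(z)=\lambda_i$. If instead $z_i=0$ and $\lambda_i>0$, then $z-U_iz_i=z$ and $\Delta^i(z) = f(z) - m(\beta_i)$, where $m(\beta_i) = \min_{w_i}\bigl[f(z+U_iw_i) + \frac{\beta_i}{2}w_i^2\bigr]$. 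By the envelope (Danskin) theorem applied to this parametric strictly convex program, $m'(\beta_i) = \frac{1}{2}\,h_i(z;\beta_i)^2$; since $\nabla_i f(z)\neq 0$ forces the unique minimizer $h_i(z;\beta_i)$ to be nonzero, $m$ is strictly increasing, so $\beta_i \mapsto \Delta^i(z)$ is strictly decreasing and meets the value $\lambda_i$ at most once. The degenerate subcase $\nabla_i f(z)=0$ gives $m(\beta_i)\equiv f(z)$ and $\Delta^i(z)\equiv 0\neq \lambda_i$, so no $\beta_i$ solves the equation and uniqueness is vacuous.

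The main obstacle is the $z_i=0$ subcase of part $(ii)$: the dependence of $\Delta^i(z)$ on $\beta_i$ passes through the minimizer $h_i(z;\beta_i)$ and is no longer an algebraic expression in $\beta_i$, so direct root counting as in part $(i)$ is not available. The crucial tool is the envelope-theorem identity for $m(\beta_i)$ combined with the strict convexity of $u_i^e$ in its first argument guaranteed by Assumption~\ref{approximation}, which together deliver the strict monotonicity needed to conclude uniqueness.
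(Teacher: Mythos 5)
Your proposal is correct, and part $(i)$ coincides with the paper's argument: both reduce $\Delta^i(z)=\lambda_i$ to the quadratic $\tfrac{\|z_i\|^2}{2}M_i^2-(\langle\nabla_i f(z),z_i\rangle+\lambda_i)M_i+\tfrac{\|\nabla_i f(z)\|^2}{2}=0$ and count roots. For part $(ii)$, however, you take a genuinely different route. The paper argues by contradiction: it assumes two parameters $\beta_i>\gamma_i$ both satisfy $\Delta^i(z)=\lambda_i$ and derives $\beta_i\le\gamma_i$ from two optimality inequalities --- for $z_i=0$ by comparing the two minimal values at the single candidate $h^i_\beta(z)$, and for $z_i\neq 0$ by sandwiching both sides between $\tfrac{\cdot}{2}\|z_i\|^2-f(z)$ using $h_i(z)=z_i$ (which follows from $\nabla_i f(z)=0$). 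You instead exhibit $\beta_i\mapsto\Delta^i(z)$ explicitly as a strictly monotone function: affine with slope $z_i^2/2>0$ when $z_i\neq 0$, and strictly decreasing via the envelope identity $m'(\beta_i)=\tfrac12 h_i(z;\beta_i)^2>0$ when $z_i=0$ and $\nabla_i f(z)\neq 0$. Your route buys a little more: it yields monotonicity of $\Delta^i$ in the parameter rather than just a two-point comparison, and it explicitly isolates the degenerate subcase $z_i=0$, $\nabla_i f(z)=0$, where the paper's Case-1 inequality $\tfrac{\beta_i}{2}\|h^i_\beta(z)\|^2\le\tfrac{\gamma_i}{2}\|h^i_\beta(z)\|^2$ fails to force $\beta_i\le\gamma_i$ because $h^i_\beta(z)=0$; you correctly observe that there $\Delta^i(z)\equiv 0$ so the equation $\Delta^i(z)=\lambda_i$ has no solution when $\lambda_i>0$ and uniqueness holds vacuously. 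Two minor remarks: like the paper, you actually prove ``at most one'' rather than existence-and-uniqueness, which is all that is needed downstream (finiteness of the bad parameter set); and your opening dichotomy omits the corner case $z_i=0$ with $\lambda_i=0$, but the paper's proof is no more careful there and that case is irrelevant to the way the lemma is used.
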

\begin{proof}
$(i)$ For  the approximation $u_i(y_i;x) = u_i^q(y_i;x,M_i)$  we  have: $$\Delta^i(z) = \frac{M_i}{2}\norm{z_i - \frac{1}{M_i}\nabla_i f(z)}^2.$$
Thus, we observe that $\Delta^i(z) = \lambda_i$ is equivalent with the following relation:
$$ \frac{\norm{z_i}^2 }{2}M_i^2 - \left(  \langle \nabla_i f(z), z_i\rangle +\lambda_i \right)M_i + \frac{\norm{\nabla_i f(z)}^2}{2} = 0.$$
which is valid for only two values of $M_i$.

\noindent $(ii)$ For the approximation $u_i(y_i;x) = u_i^e(y_i;x,\beta_i)$ we have:
$$\Delta^i(z) = f(z-U_iz_i) + \frac{\beta_i}{2}\norm{z_i}^2 - f(v^i_{\beta}(z)) -
\frac{\beta_i}{2}\norm{h^i_{\beta}(z) - z_i}^2,$$ where
$v^i_{\beta}(z)$  and  $h^i_{\beta}(z)$ are defined as in
\eqref{deltau1} corresponding to the exact approximation. Without
loss of generality, we can assume that there exist two constants
$\beta_i > \gamma_i > 0$ such that $\Delta^i(z) = \lambda_i$. In
other terms, we have:
$$ \frac{\beta_i}{2}\norm{z_i}^2 - f(v^i_{\beta}(z)) - \frac{\beta_i}{2}\norm{h^i_{\beta}(z) - z_i}^2 =
\frac{\gamma_i}{2}\norm{z_i}^2 - f(v^i_{\gamma}(z)) -
\frac{\gamma_i}{2}\norm{h^i_{\gamma}(z) - z_i}^2.$$ We analyze two
possible cases. Firstly, if $z_i = 0$, then the above equality leads
to the following relation:
\begin{align*}
f(v^i_{\beta}(z)) + \frac{\beta_i}{2}\norm{h^i_{\beta}(z)}^2 &=
f(v^i_{\gamma}(z)) + \frac{\gamma_i}{2}\norm{h^i_{\gamma}(z)}^2 \\
& \le  f(v^i_{\beta}(z)) +
\frac{\gamma_i}{2}\norm{h^i_{\beta}(z)}^2, \end{align*} which
implies that $\beta_i \le \gamma_i$, that is a contradiction.
Secondly, assuming $z_i \neq 0$ we observe from optimality of
$h^i_{\beta}(z)$ that:
\begin{equation}\label{ineq1}
 \frac{\beta_i}{2}\norm{z_i}^2 - f(v^i_{\beta}(z)) - \frac{\beta_i}{2}\norm{h^i_{\beta}(z) - z_i}^2 \ge  \frac{\beta_i}{2}\norm{z_i}^2 - f(z).
\end{equation}
On the other hand, taking into account that $z \in \mathcal{T}_f$ we
have:
\begin{equation}\label{ineq2}
\frac{\gamma_i}{2}\norm{z_i}^2 - f(v^i_{\gamma}(z)) -
\frac{\gamma_i}{2}\norm{h^i_{\gamma}(z) - z_i}^2 \le
\frac{\gamma_i}{2}\norm{z_i}^2 - f(z).
\end{equation}
From \eqref{ineq1} and \eqref{ineq2} we get $\beta_i \le \gamma_i$, thus
implying the same contradiction.
\end{proof}

\noindent We use the following notations:
\begin{align*}
C_{\max} = \max_{1\le i \le n} C_i,\quad D_{\max} = \max_{1\le i \le
n} D_i,\quad \tilde{\alpha} &= \min_{z \in \mathcal{T}_f}
\min\limits_{i \in [n]} \abs{\Delta^i(z) - \lambda_i}.
\end{align*}
\noindent Since the cardinality of basic local minima
$\mathcal{T}_f$ is finite for strongly convex functions $f$, then
there is a finite number of possible values for $\abs{\Delta^i(z) -
\lambda_i}$. Therefore, from previous lemma we obtain that
$\tilde{\alpha}=0$ for a finite number of values of parameters
$(M_i,\mu_i)$ of the approximations $u_i = u_i^q$ or $u_i=u_i^e$. We
can reason in a similar fashion for general approximations $u_i$,
i.e. that $\tilde{\alpha}=0$ for a finite number of values of
parameters $(M_i,\mu_i)$ of the approximations $u_i$ satisfying
Assumption \ref{approximation}. In conclusion, choosing randomly at
an initialization stage of our algorithm the parameters
$(M_i,\mu_i)$ of the approximations $u_i$, we can conclude that
$\tilde{\alpha}>0$ almost sure.

\noindent Further, we state the linear rate of convergence with high
probability for algorithm (RCD-IHT). Our analysis will employ ideas
from the convergence proof of deterministic iterative hard
thresholding method in \cite{Lu:12}. However, the random nature of
our family of methods and the  properties of the approximation
functions $u_i$ require a new approach. We use the notation $k_p$
for the iterations when a change in expectation of $I^k$ occurs, as
given in the previous section. We also denote with $F^*$ the global
optimal value of our original $\ell_0$ regularized problem
\eqref{l0regular}.


\begin{theorem}
Let $x^k$ be the sequence generated by the family of algorithms
(RCD-IHT) under Assumptions  \ref{assump_grad_1},
\ref{approximation} and \ref{assump_delta} and the additional assumption of strong convexity of
$f$ with parameter $\sigma$. Denote with $\kappa$ the number of
changes in expectation of $I^k$ as $k \to \infty$. Let $x^*$ be some
limit point of $x^k$ and $\rho>0$ be some confidence level.
Considering the scalar case $n_i=1$ for all $i \in [n]$, the
following statements hold:

\noindent \textit{(i)} The number of changes in expectation $\kappa$
of $I^k$ is bounded by $\left \lceil \frac{ \mathbb{E} \left [
F(x^0) - F(x^*) \right] }{\delta} \right \rceil$,  where $\delta$ is
specified in Theorem \ref{convergence_rpamiht} $(ii)$.


\noindent \textit{(ii)} The sequence $x^k$ converges linearly in the
objective function values with high probability, i.e. it satisfies
$\mathbb{P}\left(F(x^k) - F(x^*) \le \epsilon \right) \ge 1-\rho$
for $k \ge \frac{1}{\theta}\log\frac{\tilde{\omega}}{\rho\epsilon}$,
where $\tilde{\omega}=2^{\omega}(F(x^0)-F^*)$, with $\omega =
\left\{\max\limits_{t \in \rset^{}} \ \alpha t -\beta t^2 : 0 \le t
\le \left\lfloor\frac{\mathbb{E}[F(x^0) -
F(x^*)]}{\delta}\right\rfloor \right\}, \beta =
\frac{\delta}{2(F(x^0)-F^*)}$, $\alpha = \left(\log \left[2(F(x^0) -
F^*)\right]  + 2\log\frac{2 N }{\sqrt{\sigma}\xi}  - \frac{\delta}{2
(F(x^0)-F^*)} + \theta \right)$ and
$\xi=\frac{1}{2}\left(\sqrt{\frac{C_{\max}^2}{D_{\max}^2} +
\frac{\tilde{\alpha}}{D_{\max}}} -
\frac{C_{\max}}{D_{\max}}\right)$.
\end{theorem}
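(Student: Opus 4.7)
The plan is to handle the two parts in sequence, leveraging the descent and support-change estimates from Theorem \ref{convergence_rpamiht}.

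For part \emph{(i)}, I would argue via a simple telescoping/potential-function argument. By Lemma \ref{descent_ramiht} the expected objective is nonincreasing along the trajectory, and by Theorem \ref{convergence_rpamiht}$(ii)$ every iteration at which $I^k$ changes in expectation contributes an expected decrease of at least $\delta$. Summing the total expectation of the descent inequality over the $\kappa$ change-iterations and using $F(x^*) \le F(x^k)$ almost surely (since $F$ is nonincreasing and $x^*$ is a limit point, so $F(x^*) = \lim_k F(x^k)$) gives $\kappa\, \delta \le \mathbb{E}[F(x^0) - F(x^*)]$. Taking the ceiling (since $\kappa$ is integer-valued) delivers the claimed bound.

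For part \emph{(ii)}, the strategy combines three ingredients: (a) linear-in-expectation convergence of standard random coordinate descent on a fixed support, (b) a stability condition derived from Assumption \ref{assump_delta} that precludes a change of $I^k$ once $x^k$ is sufficiently close to the current convex minimizer, and (c) a union bound across the at most $\kappa$ possible changes. Between two consecutive change iterations $k_p$ and $k_{p+1}$ the support is frozen, so (RCD-IHT) reduces to standard RCD applied to the strongly convex subproblem $\min_{x \in S_{I^{k_p}}} f(x)$, and \eqref{rcd_rate_of_conv2} gives linear convergence of $\mathbb{E}\|x^k - z_p\|$ to the subproblem's minimizer $z_p \in \mathcal{T}_f$. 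From Assumption \ref{assump_delta}, whenever $\|x^k - z_p\| < \xi$ we have $|\Delta^i(x^k) - \lambda_i| \ge |\Delta^i(z_p) - \lambda_i| - C_{\max}\xi - D_{\max}\xi^2 > \tilde\alpha/2 > 0$ for every block $i$, which is precisely the condition under which neither of the two triggers for a change of $I^k$ (described just before \eqref{expectation}) can fire at the next step; the constant $\xi$ is obtained by solving the quadratic $C_{\max}\xi + D_{\max}\xi^2 = \tilde\alpha/2$. Applying Markov's inequality to \eqref{rcd_rate_of_conv2} with threshold $\xi$, combined with the strong convexity bound $\tfrac{\sigma}{2}\|x^k - z_p\|^2 \le f(x^k) - f(z_p)$, controls the probability of any additional change after a suitable number of iterations past $k_p$.

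The main technical obstacle will be combining the failure probabilities across the up to $\kappa$ phases, because the "effective initial gap" at the start of the $p$th phase satisfies $F(x^{k_p}) - F^* \le F(x^0) - F^* - p\delta$, so the per-phase convergence constants couple to the change count $p$; the expression $\alpha t - \beta t^2$ maximized in the definition of $\omega$ arises exactly from balancing the additive logarithmic contribution per change (accounting for the $2\log(2N/(\sqrt\sigma\,\xi))$ inversion of the Markov bound in terms of $\xi$ and the $\log(2(F(x^0)-F^*))$ normalization) against the per-change reduction captured by $-\beta t^2 = -\tfrac{\delta}{2(F(x^0)-F^*)} t^2$. Exponentiating the resulting optimum $\omega$ yields the inflation factor $2^\omega$ in $\tilde\omega = 2^\omega(F(x^0) - F^*)$, and then applying $(1-\theta)^k \le e^{-\theta k}$ to the final phase and inverting $e^{-\theta k}\tilde\omega \le \rho\epsilon$ gives the stated iteration count $k \ge \theta^{-1}\log(\tilde\omega/(\rho\epsilon))$. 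The second delicate step will be verifying that the constants $\alpha$ and $\beta$ in the theorem statement match exactly the contributions from the Markov inversion and the per-change potential decrement; I expect this to be a bookkeeping exercise once the overall structure is in place.
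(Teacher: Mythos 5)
Your proposal follows the paper's proof in all essentials. Part \textit{(i)} is the same telescoping argument: each change-in-expectation iteration forces an expected decrease of at least $\delta$ by Theorem \ref{convergence_rpamiht}~$(ii)$ together with the descent inequality \eqref{decrease}, and summing over the $\kappa$ change iterations gives $\kappa\delta \le \mathbb{E}[F(x^0)-F(x^*)]$. For part \textit{(ii)} you have the three correct ingredients: fixed-support linear convergence via \eqref{rcd_rate_of_conv2}, the stability radius $\xi$ extracted from Assumption \ref{assump_delta}, and the per-phase accounting that produces the $\alpha t - \beta t^2$ optimization. The one place where your route differs structurally is your step (c): the paper does \emph{not} take a union bound over the $\kappa$ phases. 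Instead it bounds each phase length by contradiction: a change in expectation at $k_{p}$ forces $\mathbb{P}(I^{k_{p}}=I^{k_{p}+1}\mid x^{k_p}) \le \frac{n-1}{n}$ by \eqref{probabilitysupp2}, whereas if the phase were longer than the claimed bound, Markov's inequality applied to \eqref{rcd_rate_of_conv2} at both $k_{p}$ and $k_{p}-1$ would give $\mathbb{P}(I^{k_{p}}=\hat I^*)$ and $\mathbb{P}(I^{k_{p}+1}=\hat I^*)$ each exceeding $\sqrt{1-1/n}$, hence (invoking independence of the two events) $\mathbb{P}(I^{k_{p}}=I^{k_{p}+1}) > \frac{n-1}{n}$ --- a contradiction. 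This makes the per-phase iteration bounds hold without spending any of the confidence budget, so the entire $\rho$ is reserved for the single Markov bound on the final, fixed-support phase; that is precisely what lets the constants $\alpha$, $\beta$, $\omega$ come out as stated. A genuine union bound over phases would require per-phase failure probability of order $\rho/\kappa$ and would inject an extra $\log\kappa$ into $\alpha$, so as written your plan would not recover the stated iteration count without reworking the constants. A minor bookkeeping note: the paper's $\xi$ corresponds to solving $C_{\max}\xi + D_{\max}\xi^2 = \tilde\alpha/4$ rather than your $\tilde\alpha/2$; either choice yields a valid radius strictly inside the stability region, but your constant will not literally match the one in the statement.
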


\begin{proof}
$(i)$ From \eqref{decrease} and Theorem \ref{convergence_rpamiht}
$(ii)$ it can be easily seen that:
\begin{align*}
\delta & \le
\mathbb{E}\left[\frac{\mu_{i_{k_p}}}{2}\norm{x^{k_p+1}-x^{k_p}}^2
\Big| x^{k_p}\right] \le F(x^{k_p}) -
\mathbb{E}[F(x^{k_p+1})|x^{k_p}] \\
& \leq  F(x^{k_p}) - \mathbb{E}[F(x^{k_{p+1}})|x^{k_p}].
\end{align*}
Taking expectation in this relation w.r.t. the entire history
$\xi^{k_p}$ we get the bound: $\delta \le \mathbb{E}\left[
F(x^{k_p}) - F(x^{k_{p+1}}) \right].$ Further, summing up over $p
\in [\kappa]$  we have:
$$\kappa \delta  \le \mathbb{E}\left[F(x^{k_1}) -
F(x^{k_{\kappa}+1})\right]\le \mathbb{E}\left[F(x^0) -
F(x^*)\right],$$ i.e. we have proved the first part of our theorem.

\noindent $(ii)$ In order to establish the linear rate of
convergence in probability of algorithm (RCD-IHT), we first derive a
bound on the number of iterations performed between two changes in
expectation of $I^k$. Secondly, we also derive a bound on  the
number of iterations performed after the support is fixed (a similar
analysis  for deterministic iterative hard thresholding method was
given  in \cite{Lu:12}). Combining these two bounds, we obtain the
linear convergence of our algorithm. Recall that for any $p \in
[\kappa]$, at iteration $k_{p}+1$, there is a change in expectation
of $I^{k_p}$, i.e.
\begin{equation*}
\mathbb{E}[\abs{I^{k_{p}} \setminus I^{k_{p}+1}} + \abs{I^{k_{p}+1}
\setminus I^{k_{p}}} \Big| \ x^{k_{p}}] > 0,
\end{equation*}
which implies that
$$\mathbb{P}\left( \abs{I^{k_{p}} \setminus I^{k_{p}+1}} + \abs{I^{k_{p}+1} \setminus
 I^{k_{p}}} > 0 | x^{k_p}\right) = \mathbb{P}\left( I^{k_{p}} \neq I^{k_{p}+1} | x^{k_p}\right)
  \ge \frac{1}{n}$$
and furthermore
\begin{equation}\label{probabilitysupp2}
 \mathbb{P}\left( \abs{I^{k_{p}} \setminus I^{k_{p}+1}} + \abs{I^{k_{p}+1}
 \setminus I^{k_{p}}} = 0 | x^{k_p} \right)  = \mathbb{P}\left( I^{k_{p}} =
 I^{k_{p}+1} | x^{k_p}\right) \le \frac{n-1}{n}.
\end{equation}
\noindent Let $p$ be an arbitrary integer from $[\kappa]$. Denote
$\hat{x}^* = \arg\min\limits_{x \in S_{I^{k_p}}} f(x)$ and
$\hat{f}^*=\mathbb{E}\left[f(\hat{x}^*) \ | \ x^{k_{p-1}+1} \right]$.

\noindent Assume that the number of iterations performed between two
changes in expectation satisfies:
\begin{equation}\label{iter_rcd_aux2}
k_{p} - k_{p-1} > \frac{1}{\theta} \left(\log \left[2(F(x^0) - F^* -
(p-1)\delta )\right] + 2\log\frac{2 n }{\sqrt{\sigma}\xi} \right) +
1,
\end{equation}
where we recall that $\sigma$ is the strong convexity parameter of
$f$. For any  $k \in [k_{p-1}+1, k_{p}]$ we denote $f^k =\mathbb{E}[f(x^k) \ | \ x^{k_{p-1}+1}]$. From Lemma \ref{descent_ramiht} and Theorem
\ref{convergence_rpamiht} we have:
$$f^{k_{p-1}+1} - \hat{f}^* \le \mathbb{E}[F(x^{k_{p-1}+1}) \ | \ x^{k_{p-1}+1}] - \mathbb{E}[F(\hat{x}^*)\ | \ x^{k_{p-1}+1}]
 \le F(x^0) - (p-1)\delta - F^*,$$


\noindent so that we can  claim that \eqref{iter_rcd_aux2} implies
\begin{equation}\label{iter_rcd2}
k_{p} - k_{p-1} > \frac{2}{\theta} \log \frac{
2\sqrt{2(f^{k_{p-1}+1} - \hat{f}^*)}n}{\sqrt{\sigma}\xi} + 1 \ge
\frac{2}{\theta} \log \frac{ \sqrt{2n(f^{k_{p-1}+1} -
\hat{f}^*)}}{\sqrt{\sigma}\xi(\sqrt{n}-\sqrt{n-1})} +1.
\end{equation}
\noindent We show that under relation \eqref{iter_rcd2}, the
probability \eqref{probabilitysupp2} does not hold. First, we
observe that between two changes in expectation of $I^k$, i.e.
$k \in [k_{p-1}+1, k_{p}]$, the  algorithm (RCD-IHT) is equivalent with
the randomized version of coordinate descent method \cite{HonWan:13,
NecPat:14a} for strongly convex problems. Therefore, the method has
linear rate of convergence \eqref{rcd_rate_of_conv2}, which in our
case is given by the following expression:
\begin{equation*}
\mathbb{E}\left[\norm{x^k\!-\!\hat{x}^*} \;|\; x^{k_{p-1}+1} \right]\!\le\!\left(1\!-\!\theta\right)^{(k-k_{p-1}-1)/2}\sqrt{\frac{2}{\sigma}\left(f^{k_{p-1}+1}-\hat{f}^*\right)},
\end{equation*}
for all $k \in [k_{p-1}+1, k_{p}]$. Taking $k=k_{p}$, if we apply
the complexity estimate \eqref{rcd_complexity2} and use the bound
\eqref{iter_rcd2}, we obtain:
$$\mathbb{E}\left[\norm{x^{k_{p}} - \hat{x}^*} \;|\; x^{k_{p-1}+1} \right] \le \left(1 - \theta\right)^{(k_{p} - k_{p-1}-1)/2}
\!\sqrt{\frac{2}{\sigma}\left(f^{k_{p-1}+1}-\hat{f}^*\right)} \!<\!
\!\xi\!\left(1\!-\!\sqrt{\frac{n\!-\!1}{n}}\right).$$ From the Markov
inequality, it can be easily seen that we have:
\begin{equation*}
\mathbb{P}\left(\norm{x^{k_{p}} - \hat{x}^*} < \xi \;|\; x^{k_{p-1}+1} \right) = 1-
\mathbb{P}\left(\norm{x^{k_{p}} - \hat{x}^*} \ge \xi \;|\; x^{k_{p-1}+1}\right) >
\sqrt{1-\frac{1}{n}}.
\end{equation*}

\noindent Let $i \in [N]$ such that $\lambda_i>0$. From  Assumption
\ref{assump_delta} and definition of parameter $\xi$ we see that the
event $\norm{x^{k_{p}} - \hat{x}^*} < \xi$ implies:
\begin{equation*}
\abs{\Delta^i(x^{k_p}) - \Delta^i(\hat{x}^*)} \le
C_{\max}\norm{x^{k_p}-\hat{x}^*} +
D_{\max}\norm{x^{k_p}-\hat{x}^*}^2 < \tilde{\alpha} \le
\abs{\Delta^i(\hat{x}^*) - \lambda_i}.
\end{equation*}

\noindent The first and the last terms from the above inequality
further imply:
\begin{equation*}
\begin{cases}
 \abs{\Delta^i(x^{k_p})} > \lambda_i, & \text{if} \quad  \abs{\Delta^i(\hat{x}^*)} > \lambda_i\\
 \abs{\Delta^i(x^{k_p})} < \lambda_i, & \text{if} \quad  \abs{\Delta^i(\hat{x}^*)} < \lambda_i,
\end{cases}
\end{equation*}
or equivalently  $I^{k_p+1} = \hat{I}^* = \left\{ j \in [n]:
\lambda_j=0 \right\} \cup \left\{ i \in [n]: \lambda_i>0,
\abs{\Delta^i(\hat{x}^*)} > \lambda_i \right\}$. \noindent In
conclusion, if \eqref{iter_rcd2} holds, then we have:
\begin{equation*}
\mathbb{P}\left( I^{k_{p}+1} = \hat{I}^* \;|\; x^{k_{p-1}+1} \right) >
\sqrt{1-\frac{1}{n}}.
\end{equation*}
Applying the same procedure as before for iteration $k = k_{p} - 1$
we obtain:
\begin{equation*}
\mathbb{P}\left( I^{k_{p}} = \hat{I}^* \;|\; x^{k_{p-1}+1}\right) >
\sqrt{1-\frac{1}{n}}.
\end{equation*}


\noindent Considering the events $\{I^{k_{p}} = \hat{I}^*\}$  and
$\{I^{k_{p}+1} = \hat{I}^*\}$ to be independent (according to the
definition of $k_p$), we have:
\begin{equation*}
\mathbb{P}\left( \left\{I^{k_{p}+1} = \hat{I}^*\right\} \cap
\left\{I^{k_{p}} = \hat{I}^*\right\} \;|\; x^{k_{p-1}+1} \right) = \mathbb{P}\left(
I^{k_{p}+1} = I^{k_{p}} \;|\; x^{k_{p-1}+1} \right) > \frac{n-1}{n},
\end{equation*}
which  contradicts the assumption $\mathbb{P}\left(I^{k_{p}} =
I^{k_{p}+1} \ | \ x^{k_{p}}\right) \le \frac{n-1}{n}$ (see
\eqref{probabilitysupp2} and the definition of $k_p$ regarding the
support of $x$).


\noindent Therefore, between two changes of support the number of
iterations is bounded by:
\begin{equation*}
k_{p} - k_{p-1} \le \frac{1}{\theta} \left(\log \left[2(F(x^0) - F^*
- (p-1)\delta )\right] + 2\log\frac{2 n }{\sqrt{\sigma}\xi} \right)
+1.
\end{equation*}
We can further derive the following:
\begin{align*}
&\frac{1}{\theta} \left(\log \left[2(F(x^0) - F^* - (p-1)\delta )\right] +
 2\log\frac{2 n }{\sqrt{\sigma}\xi} \right) \\
&= \frac{1}{\theta} \left(\log \left[2 (F(x^0) - F^*)\left(1 -
\frac{(p-1)\delta}{F(x^0)-F^*}
\right)\right] + 2\log\frac{2 n }{\sqrt{\sigma}\xi} \right) \\
& = \frac{1}{\theta} \left(\log \left[2 (F(x^0) - F^*)\right] +
 \log\left[1 - \frac{(p-1)\delta}{F(x^0)-F^*}\right] + 2\log\frac{2 n }{\sqrt{\sigma}\xi}
 \right)  \\
&\le \frac{1}{\theta} \left(\log \left[2(F(x^0) - F^*)\right]  -
\frac{(p-1)\delta}{F(x^0)-F^*} + 2\log\frac{2 n }{\sqrt{\sigma}\xi}
\right),
\end{align*}
\noindent where we  used the inequality $\log(1-t) \le -t$ for any
$t \in (0, \ 1)$. Denoting with $k_\kappa$ the number of iterations
until the last change of support, we have:
\begin{align*}
k_\kappa & \le \sum\limits_{p=1}^{\kappa}\frac{1}{\theta} \left(\log
\left[2
 (F(x^0) - F^*)\right]  - \frac{(p-1)\delta}{F(x^0)-F^*}
  + 2\log\frac{2 n }{\sqrt{\sigma}\xi}  \right) +1 \\
& = \kappa \frac{1}{\theta}\left(\log \left[2(F(x^0) - F^*)\right]
 + 2\log\frac{2 n }{\sqrt{\sigma}\xi}  + \frac{\delta}{2 (F(x^0)-F^*)} + \theta \right)
 - \frac{\kappa^2}{\theta}
 \underbrace{\frac{\delta}{2(F(x^0)-F^*)}}_{\beta}.
\end{align*}

\noindent Once the support is fixed (i.e. after $k_\kappa$
iterations), in order to reach some $\epsilon$-local minimum in
probability with some confidence level $\rho$, the algorithm
(RCD-IHT) has to perform additionally another
$$\frac{1}{\theta}\log \frac{f^{k_\kappa+1}-f(x^*)}{\epsilon \rho}$$
iterations, where we used again \eqref{rcd_complexity2} and Markov
inequality. Taking into account that the iteration $k_\kappa$ is the
largest possible integer at which the support of sequence $x^k$
could change, we can bound:
$$f^{k_\kappa + 1}- f(x^*) = E[F(x^{k_\kappa + 1})-
F(x^*)] \le F(x^0) - F^* - \kappa\delta.$$

\noindent Thus, we obtain:
\begin{align*}
\frac{1}{\theta}&\log \frac{f^{k_\kappa+1}-f(x^*)}{\epsilon \rho} \le \frac{1}{\theta}\log \frac{F(x^0) - F^* - \kappa\delta}{\epsilon \rho} \\
&\le \frac{1}{\theta}\left(\log \left[(F(x^0) - F^*)\left(1 -  \frac{\kappa\delta}{F(x^0) - F^*}\right)\right]  - \log \epsilon \rho \right)\\
&\overset{\log (1-t) \leq -t}{\le} \frac{1}{\theta}\left(\log (F(x^0) - F^*)  -  \frac{\kappa\delta}{F(x^0) - F^*}  - \log \epsilon \rho \right)\\
&\le \frac{1}{\theta}\left(\log \frac{F(x^0) - F^*}{\epsilon \rho }
-  \frac{\kappa\delta}{F(x^0) - F^*} \right).
\end{align*}
Adding up this quantity and the upper bound on $k_{\kappa}$, we get
that the algorithm (RCD-IHT) has to perform at most
$$\frac{1}{\theta} \left(\alpha\kappa - \beta \kappa^2 + \log \frac{F(x^0) - F^*}{\epsilon \rho }\right)\le \frac{1}{\theta} \left(\omega + \log \frac{F(x^0) - F^*}{\epsilon \rho }\right)$$
iterations in order to attain an $\epsilon$-suboptimal point with
probability at least $\rho$, which proves the second statement of
our theorem.
\end{proof}

\noindent Note that we have obtained  global linear convergence for
our family of random coordinate descent methods on the  class of
$\ell_0$ regularized problems with strongly convex objective
function $f$.


\section{Random data experiments on sparse learning} In this
section we analyze the practical performances of our family of
algorithms  (RCD-IHT) and compare them  with that of algorithm
(IHTA) \cite{Lu:12}. We perform several numerical tests on sparse
learning problems with randomly generated data. All  algorithms were
implemented in Matlab code and the numerical simulations are
performed on a PC with Intel Xeon E5410 CPU and 8 Gb RAM memory.

\noindent Sparse learning represents  a  collection of learning
methods which seek a tradeoff between some goodness-of-fit measure
and sparsity of the result, the latter property allowing better
interpretability. One of the models widely used in machine learning
and statistics is the linear model (least squares setting). Thus, in
the first set of tests   we consider sparse linear~formulation:

$$\min\limits_{x \in \rset^n} F(x) \quad \left(=\frac{1}{2}\norm{Ax-b}^2 + \lambda \norm{x}_0 \right),$$
where $A \in \rset^{m \times n} $ and $\lambda >0$.  We analyze  the
practical efficiency of our algorithms in terms of  the probability
of  reaching a global optimal point. Due to difficulty of finding
the global solution of this problem, we consider a small model $m=6$
and $n=12$. For each penalty parameter $\lambda $, ranging from
small values (0.01) to large values (2), we ran the family of
algorithms (RCD-IHT), for separable quadratic approximation (denoted
(RCD-IHT-$u^q$), for exact approximation (denoted (RCD-IHT-$u^e$)
and (IHTA) \cite{Lu:12} from 100 randomly generated (with random
support) initial vectors. The numbers of runs out of 100 in which
each method found the global optimum is given in Table \ref{tabel2}.
We observe that for all values of $\lambda$  our algorithms
(RC-IHT-$u^q$) and (RCD-IHT-$u^e$) are able to identify the global
optimum with a rate of success  superior to algorithm (IHTA) and for
extreme values of $\lambda$ our algorithms perform much better than
(IHTA).

\renewcommand{\tabcolsep}{4pt}
\begin{table}[ht]
\centering \caption{Numbers of runs out of 100 in which algorithms
(IHTA), (RCD-IHT-$u^q$) and (RCD-IHT-$u^e$) found  global optimum.}
{\small \label{tabel2}
\begin{tabular}{|c|c|c|c|}
\hline
$\lambda $  &\textbf{(IHTA)} & \textbf{(RCD-IHT-$u^q$)} & \textbf{(RCD-IHT-$u^e$)}\\
\hline \hline
$0.01$ & 95 &  96  & 100\\
\hline
$0.07$ & 92 &  92 & 100\\
\hline
$0.09$ & 43 &  51 & 70\\
\hline
$0.15$ & 41 &  47 & 66\\
\hline
$0.35$ & 24 &  28 & 31\\
\hline
$0.8$ & 36 &  43 & 44\\
\hline
$1.2$ & 29 &  29 & 54\\
\hline
$1.8$ & 76 &  81 & 91\\
\hline
$2$  & 79  &  86  & 97 \\
\hline
\end{tabular}
}
\end{table}

\vspace{0.2cm}

\noindent In the second set of experiments  we consider  the
$\ell_2$ regularized logistic loss model from machine learning
\cite{Bah:13}. In this model the relation between the data,
represented by a random vector $a \in \rset^n$,  and its associated
label, represented by a random binary variable $y \in \{0, 1\}$, is
determined by the conditional probability:
$$P\{y | a;x \}= \frac{e^{y \langle a,x\rangle}}{1+e^{\langle a,x\rangle}},$$
where $x$ denotes a parameter vector. Then, for a set of $m$
independently drawn  data samples $\{(a_i , y_i )\}_{i=1}^m$, the
joint likelihood can be written as a function of $x$. To find the
maximum likelihood estimate one should maximize the likelihood
function, or equivalently minimize the negative log-likelihood (the
logistic loss):
$$\min\limits_{x \in \rset^n} \frac{1}{m}\sum\limits_{i=1}^m
\log\left(1 + e^{\langle a_i,x\rangle} \right) - y_i\langle a_i,x
\rangle. $$ Under the assumption of $n \le m$ and $A = \left[a_1,
\dots, a_m \right] \in \rset^{n \times m}$  being full rank, it is
well known that $f(\cdot)$ is strictly convex. However, there are
important applications (e.g. feature selection)  where these
assumptions are not satisfied and the problem is highly ill-posed.
In order to compensate this drawback, the logistic loss is
regularized by some penalty term (e.g. $\ell_2$ norm $\norm{x}^2_2$,
see \cite{Bah:13,Has:09}). Furthermore, the penalty term implicitly
bounds the length of the minimizer, but does not promote sparse
solutions. Therefore, it is desirable to impose an additional
sparsity regularizer, such as the $\ell_0$ quasinorm. In conclusion
our problem to be minimized is given by:
$$\min\limits_{x \in \rset^n} F(x) \quad \left(=\frac{1}{m}\sum\limits_{i=1}^m \log\left(1 + e^{\langle a_i,x\rangle} \right)
- y_i\langle a_i,x \rangle + \frac{\nu}{2}\norm{x}^2 +
\norm{x}_{0,\lambda}\right),$$ where now $f$ is strongly convex with
parameter $\nu$. For simulation, data were uniformly random
generated and we fixed the  parameters $\nu=0.5$  and $\lambda=0.2$.
Once an instance of random data has been generated, we ran  10 times
our algorithms (RCC-IHT-$u^q$) and  (RCD-IHT-$u^e$) and  algorithm
(IHTA) \cite{Lu:12} starting from 10 different initial points. We
reported in Table \ref{tabel1} the best results of each algorithm
obtained over all 10 trials, in terms of best function value that
has been attained with associated sparsity and number of iterations.
In order to report relevant information, we have measured the
performance   of coordinate descent methods (RCD-IHT-$u^q$) and
(RCD-IHT-$u^e$) in terms of full iterations obtained by dividing the
number of all iterations by the dimension $n$. The column $F^*$
denotes the final function value attained by the algorithms,
$\norm{x^*}_0$ represents the sparsity of the last generated  point
and \textit{iter} (\textit{full-iter}) represents the number of
iterations (the number of full iterations). Note that our algorithms
(RCD-IHT-$u^q$) and (RCD-IHT-$u^e$) have superior performance in
comparison with algorithm (IHTA) on the reported instances. We
observe that algorithm (RCD-IHT-$u^e$) performs very few full
iterations in order to attain best function value amongst all three
algorithms. Moreover, the number of full iterations performed by
algorithm (RCD-IHT-$u^e$) scales up very well with the dimension of
the problem.

\renewcommand{\tabcolsep}{4pt}
\begin{table}[ht]
\centering \caption{Performance of Algorithms (IHTA),
(RCD-IHT-$u^q$), (RCD-IHT-$u^e$)} {\small \label{tabel1}
\begin{tabular}{|c|c|c|c|c|c|c|c|c|c|}
\hline
$m\backslash n$  &\multicolumn{3}{c|}{\textbf{(IHTA)}} & \multicolumn{3}{c|}{\textbf{(RCD-IHT-$u^q$)}} &
\multicolumn{3}{c|}{\textbf{(RCD-IHT-$u^e$)}}\\
\cline{2-10}
 & $F^*$ & $\norm{x^*}_0$ & iter & $F^*$ & $\norm{x^*}_0$ & full-iter & $F^*$ & $\norm{x^*}_0$ & full-iter\\
\hline
\hline
$20\backslash 100$ & 1.56 & 23 & 797 & 1.39 & 21 & 602 & -0.67 & 15 & 12 \\
\hline
$50\backslash 100$ & -95.88 & 31 & 4847 & -95.85 & 31 & 4046 & -449.99 & 89 & 12 \\
\hline
$30\backslash 200$ & -14.11 & 35 & 2349 & -14.30 & 33 & 1429 & -92.95 & 139 & 12  \\
\hline
$50\backslash 200$ & -0.88 & 26 & 3115 & -0.98 & 25 & 2494 & -13.28 & 83 & 19  \\
\hline
$70\backslash 300$ & -12.07 & 70 & 5849 & -11.94 & 71 & 5296 & -80.90 & 186 & 19 \\
\hline
$70\backslash 500$ &  -20.60 & 157 & 6017 & -19.95 & 163 & 5642 & -69.10 & 250 & 16 \\
\hline
$100\backslash 500$ & -0.55  & 16 & 4898 & -0.52 & 16 & 5869 & -47.12 & 233 & 14 \\
\hline
$80\backslash 1000$ & 13.01 & 197 & 9516 & 13.71 & 229 & 7073 & -0.56 & 19 & 13 \\
\hline
$80\backslash 1500$ & 5.86 & 75 & 7825 & 6.06 & 77 & 7372 & -0.22 & 24 & 14 \\
\hline
$150\backslash 2000$ & 26.43 & 418 & 21353 & 25.71 & 509 & 20093 & -30.59 & 398 & 16 \\
\hline
$150\backslash 2500$ & 26.52 & 672 & 15000 & 27.09 & 767 & 15000 & -55.26 & 603 & 17 \\
\hline
\end{tabular}
}
\end{table}
\vspace{5pt}

%
%
%


\end{sloppy}

\begin{thebibliography}{13}

\bibitem{Bah:13}
S. Bahmani, B. Raj, and P. T. Boufounos, \textit{Greedy
sparsity-constrained optimization}, Journal of Machine Learning
Research, 14(3), 807--841, 2013.

\bibitem{BanGha:08}
O. Banerjee, L. El Ghaoui,  and A. d'Aspremont,
\textit{Model selection through sparse maximum likelihood
estimation for multivariate gaussian or binary data},
The Journal of Machine Learning Research, 9,  485--516, 2008.

\bibitem{Bec:12}
A. Beck, Y.C. Eldar, \textit{Sparsity constrained nonlinear
optimization: optimality conditions and algorithms}, SIAM Journal on
Optimization, 23(3), 1480--1509, 2013.

\bibitem{BecTet:13}
A. Beck and L. Tetruashvili, \textit{On the convergence of block coordinate descent type methods}, SIAM Journal of  Optimimization, 23(2), 2037--2060, 2013.

\bibitem{Bis:07}
C.M. Bishop. \textit{Pattern recognition and machine learning}, Springer, 2007.

\bibitem{BluDav:08}
T. Blumensath and M. E. Davies, \textit{Iterative thresholding for sparse approximations}, Journal of Fourier Analysis and Applications, 14, 629--654, 2008.


\bibitem{CanTao:04}
E. J. Candes and T. Tao, \textit{Near-optimal signal recovery from random projections: universal encoding strategies}, IEEE Transactions on Information Theory, 52, 5406--5425, 2004.


\bibitem{CarFer:11}
M. Carlavan and L. Blanc-Feraud, \textit{Two constrained formulations for deblurring Poisson noisy images},   Proceedings of Conference on Image Processing, 2011.



\bibitem{Gil:12}
N. Gillis, \textit{Sparse and unique nonnegative matrix factorization through data preprocessing}, Journal of Machine Learning Research, 13, 3349-3386, 2012.

\bibitem{GriSci:00}
L Grippo and M. Sciandrone, \textit{On the convergence of the block
nonlinear Gauss-Seidel method under convex constraints}, Operations
Research Letters, 26, 127--136, 2000.

\bibitem{Has:09}
T. Hastie, R. Tibshirani, and J. Friedman, \textit{The elements of statistical learning: data mining, inference and prediction}, Springer Verlag, 2009.

\bibitem{HonWan:13}
M. Hong, X. Wang, M. Razaviyayn, Z.-Q. Luo, \textit{Iteration
complexity analysis of block coordinate descent methods},
 { \verb http://arxiv.org/abs/1310.6957 }, 2013.

\bibitem{JouNes:10}
M. Journae, Y. Nesterov, P. Richtarik and R. Sepulchre,
\textit{Generalized power method for sparse principal component
analysis}, Journal of Machine Learning Research, 11, 517--553, 2010.

\bibitem{Kek:13}
V. Kekatos and G. Giannakis, \textit{Distributed robust power system
state estimation}, IEEE Transactions on Power Systems, 28(2), 1617-1626, 2013.



\bibitem{Lu:12}
Z. Lu, \textit{Iterative hard thresholding methods for $\ell_0$ regularized convex cone programming}, Mathematical Programming, DOI: 10.1007/s10107-013-0714-4, 2013.

\bibitem{LuXia:13}
Z. Lu, L. Xiao, \textit{Randomized block coordinate nonmonotone gradient method for a class of nonlinear programming}, Technical Report,
{ \verb http://arxiv.org/abs/1306.5918 }, 2013.


\bibitem{LuZha:12}
Z. Lu and Y. Zhang, \textit{Sparse approximation via penalty decomposition methods}, SIAM Journal on Optimization, 23(4), 2448â2478, 2013.

\bibitem{LuoTse:92}
Z. Q. Luo and P. Tseng, \textit{On the convergence of the coordinate descent method for convex differentiable minimization}, Journal of Optimization Theory and Applications, 72(1), 7â-35, 1992.

\bibitem{LuoTse:93}
Z.-Q. Luo and P. Tseng, \textit{Error bounds and convergence
analysis of feasible descent methods: a general approach}, Annals of
Operations Research, vol. 46-47, pp. 157--178, 1993.




\bibitem{Nec:13}
I. Necoara, \textit{Random coordinate descent algorithms for
multi-agent convex optimization over networks}, IEEE Transactions on Automatic Control, 58(8), 2001--2012, 2013.


\bibitem{NecNes:13}
I. Necoara, Y. Nesterov and F. Glineur, \textit{A random coordinate descent method on large optimization problems with linear constraints}, International Conference on Continuous  Optimization (ICCOPT), { \verb http://acse.pub.ro/person/ion-necoara }, 2013.

\bibitem{NecCli:13}
I. Necoara and D. N. Clipici, \textit{Distributed coordinate descent methods for composite minimization}, Technical report, University  Politehnica  Bucharest,
{ \verb http://arxiv-web.arxiv.org/abs/1312.5302 }, 2013.

\bibitem{NecPat:14}
I. Necoara and A. Patrascu, \textit{A random coordinate descent algorithm for optimization problems with composite objective function and linear coupled constraints},
Computational Optimization and Applications, 57(2), 307-337, 2014.

\bibitem{NecPat:14a}
I. Necoara, A. Patrascu, Q. Tran-Dinh  and V. Cevher, \textit{Linear
convergence of a family of random coordinate descent algorithms for
strongly convex composite minimization}, in preparation, University
Politehnica Bucharest, 2014.

\bibitem{Nes:12}
Y. Nesterov, \textit{Efficiency of Coordinate Descent Methods on Huge-Scale Optimization Problems}, SIAM Journal on Optimization 22(2), 341-362, 2012.

\bibitem{Nik:13}
M. Nikolova, \textit{Description of the minimizers of least squares regularized with â0 norm. Uniqueness of the global minimizer}, SIAM Journal on Imaging Sciences, 6(2), 904-937, 2013.


\bibitem{PatNec:14}
A. Patrascu and I. Necoara, \textit{Efficient random coordinate descent algorithms for large-scale structured nonconvex optimization}, Journal of Global Optimization, DOI: 10.1007/s10898-014-0151-9, 2014.


\bibitem{RicTac:12}
P. Richtarik and M. Takac, \textit{Iteration complexity of
randomized block-coordinate descent methods for minimizing a
composite function}, Mathematical Programming, 144(1-2), 1-38, 2014.

\bibitem{TseYun:09}
P. Tseng and S. Yun, \textit{A Coordinate Gradient Descent Method
for Nonsmooth Separable Minimization}, Mathematical Programming,
117, 387423, 2009.


\end{thebibliography}
\end{document}